\numberwithin{equation}{section}
\newcommand{\x} {\tilde{x}}
\newtheorem{theorem}{Theorem}[section]
\newtheorem{corollary}[theorem]{Corollary}
\newtheorem{proposition}[theorem]{Proposition}
\newtheorem{lemma}[theorem]{Lemma}
\newtheorem{definition}[theorem]{Definition}
\theoremstyle{remark}
\newtheorem{remark}[theorem]{Remark}
\begin{document}

\thanks{}

\author{F. Micena}
\address{Departamento de Matem\'atica,
  IM-UFAL Macei\'{o}-AL, Brazil.}
\email{fpmicena@gmail.com}

\author{A. Tahzibi}
\address{Departamento de Matem\'atica,
  ICMC-USP S\~{a}o Carlos-SP, Brazil.}
\email{tahzibi@icmc.usp.br}

\renewcommand{\subjclassname}{\textup{2000} Mathematics Subject Classification}

\date{\today}

\setcounter{tocdepth}{2}

\title{On the Unstable Directions and Lyapunov Exponents of  Anosov Endomorphisms}
\maketitle
\begin{abstract}
Despite the invertible setting, Anosov endomorphisms may have infinitely many unstable directions. Here we  prove, under transitivity assumption, that an Anosov endomorphism on a closed manifold $M,$ is either special (that is, every $x \in M$ has only one unstable direction) or for a typical  point in $M$ there are infinitely many unstable directions. Other result of this work is the semi rigidity of the unstable Lyapunov exponent of a $C^{1+\alpha}$ codimension one Anosov endomorphism and $C^1$ close to a linear endomorphism of $\mathbb{T}^n$ for $(n \geq 2).$  In the appendix we give a proof for ergodicity of  $C^{1+\alpha}, \alpha > 0,$ conservative Anosov endomorphism.
\end{abstract}

\section{Introduction}\label{section.preliminaries}

In $1970s,$ the works \cite{PRZ} and \cite{MP}  generalized the notion of Anosov diffeomorphism for non invertible maps, introducing the notion of Anosov endomorphism. We consider $M$ a $C^{\infty}$ closed manifold.

\begin{definition}\cite{PRZ} \label{defprz} Let $f: M \rightarrow M$ be a  $C^1$ local diffeomorphism. We say that $f$ is an Anosov endomorphism if there is constants $C> 0$ and $\lambda > 1,$ such that, for every $(x_n)_{n \in \mathbb{Z}}$ an $f-$orbit there is a splitting

$$T_{x_i} M = E^s_{x_i} \oplus E^u_{x_i}, \forall i \in \mathbb{Z},$$

which is preserved by $Df$ and for all $n > 0 $ we have

$$||Df^n(x_i) \cdot v|| \geq C^{-1} \lambda^n ||v||, \;\mbox{for every}\; v \in E^u_{x_i} \;\mbox{and for any} \; i \in \mathbb{Z}$$
$$||Df^n(x_i) \cdot v|| \leq C\lambda^{-n} ||v||, \;\mbox{for every}\; v \in E^s_{x_i} \;\mbox{and for any} \; i \in \mathbb{Z}.$$

\end{definition}

Anosov endomorphisms  can be defined in an equivalent way (\cite{MP}):

\begin{definition}\cite{MP} \label{defmp} A $C^1$ local diffeomorphism $f: M \rightarrow M$ is said an Anosov endomorphism if $Df$ contracts uniformly a continuous sub-bundle $E^s \subset TM$ into itself, and the action of $Df$ on $TM/E^s$ is uniformly expanding.
\end{definition}

Sakai, in \cite{SA} proved that, in fact, the definitions $\ref{defprz}$ and $\ref{defmp}$ are equivalent.

A contrast between Anosov diffeomorphisms and Anosov endomorphisms is the non-structural stability of the latter. Indeed, $C^1-$close to any linear Anosov endomorphism $A$ of Torus, Przytycki \cite{PRZ} constructed Anosov endomorphism which has infinitely many unstable direction for some orbit and consequently he showed that $A$ is not structurally stable. However, it is curious to observe that the topological entropy is locally constant among Anosov endomorphisms. Indeed, take the lift of Anosov endomorphism to the inverse limit space (see preliminaries for the definition). At the level of inverse limit space, two nearby Anosov endomorphisms are conjugate (\cite{PRZ}, \cite{BerRov}) and lifting to inverse limit space does not change the entropy.

Two endomorphisms (permitting singularities) $f_1, f_2$ are $C^1-$inverse limit conjugated, if there exists a homeomorphism $h : M^{f_1} \rightarrow M^{f_2}$ such that $h \circ \tilde{f_1} = \tilde{f_2} \circ h$ where $\tilde{f_i}$ are the lift of $f_i$ to the orbit space (see preliminaries).

Denote by $p$ the natural projection $p: \overline{M} \rightarrow M,$ where $\overline{M}$ is the universal covering. Note that an unstable direction $E^u_{\overline{f}}(y)$ projects on an unstable direction of $T_x M, x = p(y)$ following the definition $\ref{defprz}, $ that is $Dp(y) \cdot (E^u_{f}(y)) = E^u(\tilde{x}), $ where $\x =  p (\mathcal{O}(y)).$

\begin{proposition}\label{propMP}\cite{MP} $f$ is an Anosov endomorphism of $M,$ if and only if, the lift  $\overline{f}: \overline{M} \rightarrow \overline{M} $  is an Anosov diffeomorphism of $\overline{M},$ the universal cover of $M.$
\end{proposition}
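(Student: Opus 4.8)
The plan is to equip $\overline{M}$ with the Riemannian metric pulled back from $M$ by the covering projection $p$, so that $p$ becomes a local isometry and each $Dp(y): T_y\overline{M} \to T_{p(y)}M$ is a linear isometry. The first thing I would establish is that the lift $\overline{f}$ is a genuine (global) diffeomorphism rather than merely a local one. Since $M$ is closed and $f$ is a local diffeomorphism, $f$ is proper and open, hence a finite covering map of $M$; therefore $f \circ p: \overline{M} \to M$ is again a covering map, and because $\overline{M}$ is simply connected it is in fact a universal covering of $M$. The map $\overline{f}$, defined as the lift of $f \circ p$ through $p$ (so that $p \circ \overline{f} = f \circ p$), is then the comparison homeomorphism between two universal covers of $M$; being simultaneously a local diffeomorphism, it is a diffeomorphism of $\overline{M}$. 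In particular every $y \in \overline{M}$ has a unique bi-infinite $\overline{f}$-orbit, whose projection $p(\mathcal{O}(y))$ is an $f$-orbit $(x_i)_{i\in\mathbb{Z}}$ in $M$.

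For the forward implication, assume $f$ is an Anosov endomorphism. Given $y \in \overline{M}$, consider the associated $f$-orbit $(x_i)$ and its hyperbolic splitting $T_{x_i}M = E^s_{x_i}\oplus E^u_{x_i}$ from Definition \ref{defprz}. I would transport this splitting to $\overline{M}$ by setting $E^{s}_{\overline{f}}(\overline{f}^i(y)) = (Dp(\overline{f}^i(y)))^{-1}E^s_{x_i}$, and likewise for the unstable part; because $Dp$ is a linear isometry intertwining the derivatives, i.e. $Df(x_i)\circ Dp = Dp \circ D\overline{f}$, the resulting splitting is $D\overline{f}$-invariant and inherits the very same constants $C$ and $\lambda$ in the contraction and expansion estimates. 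Since $\overline{f}$ is invertible, $E^{s,u}_{\overline{f}}(y)$ depends only on $y$ and defines honest subbundles of $T\overline{M}$; their continuity follows from continuity of the bundles on $M$ (as a function of the orbit $\x = p(\mathcal{O}(y))$) together with the local-isometry property of $p$. Uniformity of the constants over the noncompact base $\overline{M}$ is automatic because they descend from the compact base $M$. This yields a continuous $D\overline{f}$-invariant uniformly hyperbolic splitting of $T\overline{M}$, so $\overline{f}$ is an Anosov diffeomorphism.

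For the converse, assume $\overline{f}$ is an Anosov diffeomorphism with invariant splitting $T\overline{M}=E^s_{\overline{f}}\oplus E^u_{\overline{f}}$ and uniform constants $C,\lambda$. Given any $f$-orbit $(x_i)_{i\in\mathbb{Z}}$, choose a lift $y_0$ of $x_0$ and set $y_i = \overline{f}^i(y_0)$; invertibility of $\overline{f}$ guarantees that $(y_i)$ is the bi-infinite orbit projecting to $(x_i)$. Pushing the splitting at $y_i$ forward by the linear isometry $Dp(y_i)$ defines $E^{s}_{x_i}:=Dp(y_i)E^s_{\overline{f}}(y_i)$ and $E^{u}_{x_i}:=Dp(y_i)E^u_{\overline{f}}(y_i)$; the intertwining relation again transfers $Df$-invariance and the hyperbolic estimates verbatim, producing exactly the splitting required by Definition \ref{defprz}. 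Thus $f$ is an Anosov endomorphism.

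The step I expect to be the main obstacle is the first one: proving that $\overline{f}$ is a global diffeomorphism and setting up the precise dictionary between $f$-orbits in $M$ and points (equivalently, bi-infinite orbits) in $\overline{M}$, so that the possible multiplicity of unstable directions at a single $x\in M$ corresponds cleanly to the distinct lifts of $x$. Once the covering-space bookkeeping is in place, both directions reduce to transporting the splitting through the isometry $Dp$, which preserves norms and hence every quantitative hyperbolicity estimate without loss.
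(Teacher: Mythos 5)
Your first step (that $\overline{f}$ is a global diffeomorphism) and your forward implication are essentially sound: since $\overline{f}$ is invertible, each $y\in\overline{M}$ determines a unique bi-infinite orbit, and pulling back the orbit-wise splitting of Definition \ref{defprz} through the local isometry $Dp$, using the intertwining $Dp\circ D\overline{f}=Df\circ Dp$, produces a $D\overline{f}$-invariant uniformly hyperbolic splitting with the same constants (continuity can be quoted from Theorem \ref{prz}, or deduced by the standard argument for uniformly hyperbolic splittings with bounded derivative). Note the paper does not prove this proposition at all --- it cites Ma\~{n}\'{e}--Pugh --- so your attempt stands or falls on its own.

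The converse, however, contains a genuine gap, and it is exactly the subtlety this paper is built around. You claim that, given an arbitrary $f$-orbit $(x_i)_{i\in\mathbb{Z}}$, choosing a lift $y_0$ of $x_0$ and setting $y_i=\overline{f}^i(y_0)$ yields ``the bi-infinite orbit projecting to $(x_i)$.'' This fails: for $i<0$ you only know $f(p(y_{-1}))=x_0$, and since $f$ is not injective, $p(y_{-1})$ need not equal the prescribed $x_{-1}$; the backward orbit of $y_0$ projects to \emph{some} pre-history of $x_0$, not the given one. Moreover no choice of lift can repair this in general: the fiber $p^{-1}(x_0)$ is countable (it is in bijection with $\pi_1(M)$), while the set of pre-histories of $x_0$ has cardinality $(\deg f)^{\mathbb{N}}$, uncountable when $\deg f\geq 2$. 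This is precisely why the paper asserts that $\mathcal{E}^u_f(x)$ equals the \emph{closure} of $\bigcup_{p(y)=x}Dp(y)\cdot E^u_{\overline{f}}(y)$ rather than the union itself, and why Przytycki's examples have uncountably many unstable directions at a single point. As written, your argument constructs the splitting of Definition \ref{defprz} only along the countably many orbits through each point that actually lift. Two standard repairs: (a) verify Definition \ref{defmp} instead --- the stable bundle $E^s_{\overline{f}}$ projects to a well-defined continuous bundle $E^s\subset TM$ because stable vectors are characterized by forward contraction (an orbit-independent property), $Dp$ identifies $T\overline{M}/E^s_{\overline{f}}$ with $TM/E^s$ equivariantly so the quotient action of $Df$ is uniformly expanding, and then Sakai's equivalence \cite{SA} gives Definition \ref{defprz}; or (b) keep Definition \ref{defprz} but, for a non-liftable orbit, lift each finite backward segment $(x_{-k},\ldots,x_0)$, push the unstable space at the lifted endpoint down to $T_{x_0}M$, and extract a convergent subsequence in the compact Grassmannian, checking that the uniform expansion estimates survive the limit --- the same closure argument the paper uses for its proposition on $\mathcal{E}^u_f(x)$.
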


 An advantage to work with the latter definition is that in $\overline{M}$ we can construct invariant foliations $\mathcal{F}^s_{\overline{f}}, \mathcal{F}^u_{\overline{f}}.$

 Given an Anosov endomorphism and $\x= (x_n)_{n \in \mathbb{Z}}$ an $f-$ orbit we denote by $ E^u(\x)$ the unstable bundle subspace of $T_{x_0}(M)$ corresponding to the orbit $(x_n)_{n \in \mathbb{Z}}.$ In  \cite{PRZ} one constructs examples of Anosov endomorphism such that $E^u({\tilde{x}}) \neq E^u (\tilde{y}),$ when $x_0 = y_0,$ but $(x_n)_n \neq (y_n)_n.$ In fact, it is possible that $x_0 \in M$ has uncountable unstable directions, see \cite{PRZ}.
 An Anosov endomorphism for which $E^u(\x)$ just depends on $x_0$ (unique unstable direction for each point) is called special Anosov endomorphism. A linear Anosov endomorphism of torus is an example of special Anosov endomorphism.

A natural question is whether it is possible  to find an example of  (non special)Anosov endomorphism, such that every $x \in M$ has a finite number of unstable directions. It is also interesting to understand the structure of points with infinitely many unstable directions. For transitive Anosov endomorphisms we prove the following dichotomy:
\begin{theorem}\label{teo1} Let $f: M \rightarrow M$ be a transitive Anosov endomorphism, then:
\begin{enumerate}
\item Either  $f$ is an special Anosov endomorphism,
\item Or there exists a residual subset $\mathcal{R} \subset M,$ such that for every $x \in \mathcal{R},$ $x$ has infinitely many unstable directions.

\end{enumerate}

\end{theorem}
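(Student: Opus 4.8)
The plan is to study the multiplicity function $g(x)=\#\{E^u(\x): \x=(x_n)_{n\in\ZZ}\ \text{an } f\text{-orbit with } x_0=x\}\in\{1,2,\dots,\infty\}$, the number of distinct unstable directions sitting over $x$, together with its superlevel sets $A_m=\{x\in M: g(x)\ge m\}$. Speciality is exactly the condition $A_2=\emptyset$, so the dichotomy becomes: either $A_2=\emptyset$, or $A_2\neq\emptyset$, in which case I must produce a residual set on which $g=\infty$. I would first establish three structural properties of the $A_m$. First, \emph{forward invariance}: writing $\sigma$ for the shift on orbits and $\mathcal D(x)$ for the set of unstable directions over $x$, the defining relation $Df(x_0)\cdot E^u(\x)=E^u(\sigma\x)$ together with injectivity of $Df(x_0)$ gives $\mathcal D(f(x))\supseteq Df(x)\,\mathcal D(x)$, hence $g(f(x))\ge g(x)$ and $f(A_m)\subseteq A_m$. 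Second, each $A_m$ is \emph{open}. Third, for a transitive map a nonempty open forward invariant set is dense (the tail of a dense orbit lands and remains in it, and tails of dense orbits are dense since $M$ has no isolated points). Granting these, Baire's theorem closes the argument once every $A_m$ is known to be nonempty: they are then open and dense, so $\mathcal R:=\bigcap_{m\ge1}A_m$ is residual and each $x\in\mathcal R$ has infinitely many unstable directions.

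Openness of $A_m$ is the technical core. Here I would use that, by Proposition \ref{propMP}, the lift to the universal cover is an Anosov diffeomorphism, so $\x\mapsto E^u(\x)$ is continuous on the orbit space with the product topology, with the quantitative feature that two backward orbits agreeing on their first $N$ terms have unstable directions differing by at most $C\lambda^{-2N}$ (the recent past dominates, since pushing an unstable cone forward contracts it projectively). Given $x\in A_m$, choose $m$ orbits realizing $m$ distinct directions; since distinct backward orbits branch at a finite time, I can fix $N$ so large that these orbits are already pairwise distinct on their first $N$ backward steps and so that the truncation error is below half the minimal gap between the $m$ directions. For $z$ near $x$ I lift each length-$N$ backward segment along the same preimage branches (local inverses of $f$), obtaining $m$ still-distinct segments from $z$, which I complete arbitrarily to full orbits; continuity plus the truncation estimate keeps the $m$ resulting directions pairwise distinct, so $z\in A_m$.

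The remaining point---\emph{if $A_2\neq\emptyset$ then $A_m\neq\emptyset$ for every $m$}---is where non-invertibility is exploited, and I would obtain it in one stroke by exhibiting a single point carrying infinitely many directions. Since $A_2$ is open and nonempty and (transitive Anosov, by shadowing) periodic points are dense, pick a periodic $p\in A_2$ of period $\pi$ and set $L:=Df^{\pi}(p)$. As $p\in A_2$, there is a backward orbit through $p$ whose unstable direction $d'$ differs from the genuine unstable direction $E^u_{\mathrm{per}}$ of the periodic orbit. For each $l\ge0$ form the backward orbit $\x^{(l)}$ that runs along the periodic orbit for $l\pi$ steps (returning to $p$, as $f^{l\pi}(p)=p$) and then follows the branch realizing $d'$; the equivariance $Df(x_0)E^u(\x)=E^u(\sigma\x)$ gives, exactly, $E^u(\x^{(l)})=L^{l}d'$ in the Grassmannian of unstable planes at $p$. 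All these planes are transverse to the single well-defined stable space $E^s_p$, and in the affine chart representing such a plane as the graph of a linear map $E^u_{\mathrm{per}}\to E^s_p$ the induced action of $L$ is the \emph{linear} contraction $A\mapsto (L|_{E^s})\,A\,(L|_{E^u})^{-1}$; since $d'\ne E^u_{\mathrm{per}}$ corresponds to a nonzero $A_0$, and an injective linear contraction has no nonzero periodic vector, the iterates $L^{l}d'$ are pairwise distinct and converge to $E^u_{\mathrm{per}}$. Thus $\mathcal D(p)$ is infinite, $p\in\bigcap_m A_m$, and in particular every $A_m$ is nonempty.

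I expect the main obstacle to be the openness lemma, specifically making rigorous the uniform finite-time separation of distinct unstable directions and the continuous lifting of backward branches as the base point moves; the continuity and H\"older control of $E^u$ on the inverse limit, inherited from the Anosov diffeomorphism on the universal cover, is precisely what powers this step, and the same equivariance relation also yields the clean identity $E^u(\x^{(l)})=L^{l}d'$ driving the boosting construction.
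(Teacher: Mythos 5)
Your proof is correct, but the route through the key step is genuinely different from the paper's. Both proofs share the same outer skeleton (a counting function, openness of its superlevel sets via continuity of $\tilde{x}\mapsto E^u(\tilde{x})$, monotonicity along forward orbits, and transitivity plus Baire to get the residual set), and both exploit the same hyperbolic mechanism --- forward iteration collapses distinct unstable directions together. The difference is how you produce a point with infinitely many directions when $f$ is not special. The paper argues by contradiction: assuming $u(x)<\infty$ everywhere, it applies Baire to the closed sets $\Lambda_k=\{u\le k\}$ to get a uniform bound $k_0$, then uses its angle lemma along a dense orbit to boost the count past $k_0$, a contradiction; this uses nothing beyond transitivity and continuity, and yields as a by-product that ``finite everywhere'' forces ``special.'' You instead construct such a point directly: density of periodic points puts a periodic $p$ in the open set $A_2$, and pushing the extra direction $d'$ around the cycle via $E^u(\tilde{x}^{(l)})=L^l d'$, with $L=Df^{\pi}(p)$ acting on graphs over $E^u_{\mathrm{per}}$ as the linear map $A\mapsto (L|_{E^s})A(L|_{E^u})^{-1}$, gives infinitely many pairwise distinct directions accumulating on $E^u_{\mathrm{per}}$. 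This is more constructive and pinpoints periodic points carrying infinitely many directions, but it requires an input the paper never needs: density of periodic points for transitive Anosov endomorphisms (Przytycki's Axiom A property plus $\Omega(f)=M$ from transitivity), which you should cite explicitly rather than invoke in passing. Two minor points to tighten: because of the hyperbolicity constant $C$, the map $A\mapsto (L|_{E^s})A(L|_{E^u})^{-1}$ is only \emph{eventually} contracting ($\|\Phi^l A\|\le C^2\lambda^{-2l\pi}\|A\|$), which still rules out nonzero periodic vectors, so your conclusion stands; and in the openness argument the lifted backward segments of $z$ do not \emph{agree} with those of $x$ but are only close, so you need uniform continuity of $E^u$ on the compact inverse limit space (the paper's Theorem \ref{prz}) in addition to the truncation estimate --- exactly as you indicate, so this is a matter of phrasing, not a gap.
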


Observe that when $M$ is the torus $\mathbb{T}^n, n \geq 2,$ all Anosov endomorphism of $\mathbb{T}^n$ are transitive, see \cite{AH}.

Analysing the unstable Lyapunov exponents of the Anosov endomorphism,  similarly with \cite{MT} we can prove:

\begin{theorem}\label{teo2} Let $A: \mathbb{T}^n \rightarrow \mathbb{T}^n, n \geq 2$ be a linear Anosov endomorphism, with $dim E^u_A = 1.$ Then there is a $C^1$ open set $\mathcal{U},$ containing $A,$ such that for every $f \in \mathcal{U}$ a $C^{1 + \alpha}, \alpha> 0,$  conservative Anosov endomorphism we have $\lambda^u_f(x) \leq \lambda^u(A),$ for $m$ almost everywhere $x \in \mathbb{T}^n,$ where $m$ is the Lebesgue measure of $\mathbb{T}^n.$
\end{theorem}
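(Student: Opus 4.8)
The plan is to combine three ingredients: the local constancy of the topological entropy among Anosov endomorphisms, Pesin's entropy formula in the conservative (non-invertible) setting, and the ergodicity of $f$ established in the appendix.

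First I would fix the $C^1$ neighborhood $\mathcal{U}$ of $A$ small enough that every $f \in \mathcal{U}$ is an Anosov endomorphism (this is a $C^1$-open condition) and is $C^1$-inverse limit conjugate to $A$. Since lifting to the inverse limit space preserves topological entropy and inverse-limit conjugate systems share the same entropy, one gets $h_{\mathrm{top}}(f) = h_{\mathrm{top}}(A)$. For the linear model, with $\dim E^u_A = 1$ there is a single expanding eigenvalue $\lambda$, $|\lambda| > 1$, and $h_{\mathrm{top}}(A) = \log|\lambda| = \lambda^u(A)$. Hence $h_{\mathrm{top}}(f) = \lambda^u(A)$ for every $f \in \mathcal{U}$.

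Next, take $f \in \mathcal{U}$ that is $C^{1+\alpha}$ and conservative, so Lebesgue measure $m$ is $f$-invariant and absolutely continuous. Passing to the natural extension $(\tilde f, \tilde m)$ on the inverse limit $M^f$, Pesin's entropy formula for $C^{1+\alpha}$ endomorphisms preserving an absolutely continuous measure gives $h_m(f) = h_{\tilde m}(\tilde f) = \int \sum_i \lambda_i^+ \, d\tilde m$. Since $\dim E^u = 1$, the only positive Lyapunov exponent is the unstable one, so the right-hand side equals $\int \lambda^u_f \, d\tilde m$. Because the unstable exponent $\lambda^u_f(x) = \lim_n \tfrac{1}{n}\log \|Df^n(x)|_{E^u}\|$ is read off the forward orbit alone, it descends to a function of $x \in \mathbb{T}^n$ and its $\tilde m$-integral coincides with $\int_{\mathbb{T}^n} \lambda^u_f \, dm$. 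Combining with the variational principle $h_m(f) \le h_{\mathrm{top}}(f) = \lambda^u(A)$ produces the average estimate $\int_{\mathbb{T}^n} \lambda^u_f \, dm \le \lambda^u(A)$.

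Finally I would upgrade this average bound to a pointwise one. Being determined by the forward orbit, $\lambda^u_f$ is an $f$-invariant function on $\mathbb{T}^n$; by the ergodicity of the $C^{1+\alpha}$ conservative Anosov endomorphism $f$ proved in the appendix, it is therefore constant $m$-almost everywhere. A constant whose integral against the probability measure $m$ is at most $\lambda^u(A)$ must itself be at most $\lambda^u(A)$, which yields $\lambda^u_f(x) \le \lambda^u(A)$ for $m$-a.e.\ $x$. The entropy bookkeeping is routine; the two points demanding care are the correct reading of Pesin's formula in the non-invertible case (entropy and exponents must be computed on the natural extension, as in the smooth ergodic theory of endomorphisms) and the observation that it is precisely ergodicity that converts the integral inequality into the desired pointwise one, since otherwise distinct ergodic components could carry unstable exponents on both sides of $\lambda^u(A)$.
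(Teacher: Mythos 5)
Your proposal is correct, but it follows a genuinely different route from the paper's own proof. You argue ``softly'': inverse-limit conjugacy of $f$ with $A$ on a small $C^1$ neighbourhood (exactly what the paper's introduction and the remark after Theorem \ref{teo2} assert, via \cite{PRZ}, \cite{BerRov}) plus invariance of topological entropy under passage to the natural extension give $h_{\mathrm{top}}(f)=h_{\mathrm{top}}(A)$, and Bowen's entropy formula for toral endomorphisms with $\dim E^u_A=1$ gives $h_{\mathrm{top}}(A)=\lambda^u(A)$; then the Qian--Zhu Pesin formula for conservative endomorphisms (the same fact the appendix invokes, and with $\dim E^u=1$ the only positive exponent is $\lambda^u_f$, of multiplicity one) identifies $\int\lambda^u_f\,dm$ with $h_m(f)$; the variational principle caps this by $h_{\mathrm{top}}(f)$; and the appendix ergodicity theorem makes the $f$-invariant function $\lambda^u_f$ a.e.\ constant, which upgrades the integral bound to the pointwise a.e.\ bound. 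Every ingredient is endorsed elsewhere in the paper (including the well-definedness of $\lambda^u_f(x)$ in the remark before the theorem), and there is no circularity since the appendix does not use Theorem \ref{teo2}. The paper instead runs a direct geometric contradiction in the universal cover: assuming $\lambda^u_f>(1+5\varepsilon)\lambda^u_A$ on a positive measure set $Z$, upper absolute continuity of $\mathcal{F}^u_{\overline{f}}$ yields an unstable segment $[x,y]_u$ meeting $Z$ with positive leaf measure, and then quasi-isometry of the lifted foliations (Lemma \ref{quasi_iso_fol}, via Brin) together with the large-scale comparison of $\overline{f}$ with $A$ (Corollary \ref{nice} and Lemma \ref{linalg}) force $\|\overline{f}^nx-\overline{f}^ny\|$ to grow at most like $(1+\varepsilon)^{2n}e^{n\lambda^u_A}\|x-y\|$ and at least like $\alpha_0Q^2(1+2\varepsilon)^{2n}e^{n\lambda^u_A}\|x-y\|$, which is absurd for large $n$. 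Your route is shorter and explains the theorem conceptually as entropy semi-rigidity, at the cost of heavier machinery (inverse-limit structural stability, Qian--Zhu SRB theory, and ergodicity, none of which the paper's proof needs); the paper's route is quantitative and self-contained at the level of foliations in $\mathbb{R}^n$, and its geometric lemmas are reusable, following the scheme of \cite{MT}.
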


\begin{remark}
To prove the Theorem \ref{teo2}, the neighbourhood $\mathcal{U}$ is can be chosen very small, such that every $f \in \mathcal{U}$ has its lift conjugated to $A$ in $\mathbb{R}^n.$ Then by this fact, we can consider a priori that also we have $dim E^u_f = 1.$
\end{remark}

\section{General Preliminaries Results.}\label{section.preliminaries}

In this section we present some classical results on the theory of Anosov endomorphism, that will be important for our purposes for the rest of this work.

\subsection{The Limit Inverse Space.} Consider $(X,d) $ a compact metric space and $f: X \rightarrow X$ a continuous map, we define  a new compact metric space called limit inverse space for $f$ or natural extension of $f,$ being:

$$X^f := \left\{(x_n)_{n \in \mathbb{Z}} \in \prod_{i \in \mathbb{Z}} X_i|\;\; X_i = X, \;\forall \; i \in \mathbb{Z} \;\;\mbox{and}\;\; f(x_i) = x_{i+1} \forall i\in \mathbb{Z}  \right\}.$$

In this text  we denote  $X^f$ by $\widetilde{X}.$ Also we go to denote $\x$ being an element $(x_n)_{n \in \mathbb{Z}}$  of $\widetilde{X}.$ We introduce a metric $\widetilde{d}$ in $\widetilde{X}$ as following:

$$\tilde{d}(\tilde{x}, \tilde{y}) = \displaystyle\sum_{i \in \mathbb{Z}}\frac{d(x_i, y_i)}{2^{|i|}}.$$

It is easy to see  that $(\tilde{X}, \tilde{d})$ is a compact metric space.
Consider $\pi: \widetilde{X} \rightarrow X,$ the projection in the zero coordinate, that is, if $\x = (x_n)_{n \in \mathbb{Z}}, $ then $\pi(\x) = x_0.$
One can verify that $\pi$ is continuous.

\begin{definition} We denote a pre-history of $x,$  a sequence of type $\x_{-} = (\ldots, x_{-2}, x_{-1}, x_0 = x),$ such that $f(x_{-i})= x_{-i + 1}, i =1, 2, \ldots.$
\end{definition}

Denote by $X^f_{-}$ or $\widetilde{X}_{-},$ the space of the all pre-histories with $x_0 \in X.$  The space $(\widetilde{X}_{-}, \widetilde{d})$ also is compact and the distance of two pre-histories of the same point $x_0 \in X$ is $\displaystyle\sum_{i=0}^{\infty}\frac{d_M(x_{-i}, y_{-i})}{2^i}.$

In the Anosov endomorphism context, $E^u(\x)$  depends only on $\x_{-},$ and this is why many times in this work we deal only with pre histories.

\subsection{Some Nice Properties of Anosov Endomorphisms.}
The set of $C^1$ Anosov endomorphisms is open like Anosov diffeomorphisms. However, structural stability in the usual sense does not hold for Anosov endomorphisms (See the correct context for structural stability of Anosov endomorphisms in  Berger-Rovella \cite{BerRov}).
\begin{theorem}[Przytycki \cite{PRZ}, Ma\~{n}\'{e}-Pugh \cite{MP}]
Anosov endomorphisms of a manifold $M$ is an open set in the $C^1$ topology.
\end{theorem}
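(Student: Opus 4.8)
The plan is to recast the hyperbolicity of an Anosov endomorphism as an adapted \emph{cone-field condition} on $TM$ that involves only the pointwise derivative over a single iterate, and is therefore manifestly stable under $C^1$-small perturbations. The essential difficulty, compared with the diffeomorphism case, is the one stressed in the introduction: the unstable directions of an endomorphism are not a subbundle over $M$, since they depend on the whole pre-history $\x_{-}$. The role of the cone reformulation is precisely to avoid perturbing a non-existent global unstable bundle. The cones are genuine continuous objects on $TM$, the invariance requirements are purely pointwise and one-step, and the unstable space attached to each orbit is recovered \emph{a posteriori} as a nested intersection along the corresponding pre-history.

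For the first step I would construct the cone fields. Given an Anosov endomorphism $f$, after passing to an adapted (Lyapunov) metric — equivalently, to a high iterate $f^N$ — the estimates of Definition \ref{defprz} hold with $C=1$. Let $E^s\subset TM$ be the continuous stable subbundle of Definition \ref{defmp}, which depends only on the base point because forward orbits are determined by $x_0$, and fix a continuous complement $E'$ with $TM=E^s\oplus E'$. For small $\beta>0$ set $C^s_x=\{\,v=v^s+v':\|v'\|\le\beta\|v^s\|\,\}$ and $C^u_x=\{\,v=v^s+v':\|v^s\|\le\beta\|v'\|\,\}$. Uniform transversality of $E^s$ to every unstable direction (a consequence of uniform hyperbolicity in the adapted metric) guarantees, for $\beta$ not too small, that every unstable line over $x$, for every pre-history through $x$, lies in $C^u_x$. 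Using the expansion and contraction one then verifies, uniformly in $x$ by compactness of $M$, the two one-step conditions: $Df(x)\,C^u_x\subset\operatorname{int}C^u_{f(x)}\cup\{0\}$ with $\|Df(x)v\|\ge\lambda\|v\|$ on $C^u_x$ (forward invariance and expansion of the unstable cone), and $Df(x)^{-1}C^s_{f(x)}\subset\operatorname{int}C^s_x\cup\{0\}$ with $\|Df(x)^{-1}v\|\ge\lambda\|v\|$ on $C^s_{f(x)}$ (backward invariance and expansion of the stable cone).

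The remaining two steps are routine. For robustness, note that since $f$ is a local diffeomorphism on the compact $M$, the map $x\mapsto Df(x)$ is uniformly bounded and uniformly bounded away from the singular matrices, so $x\mapsto Df(x)^{-1}$ is continuous and bounded; hence a $C^1$-small perturbation $g$ of $f$ satisfies $Dg(x)\to Df(x)$ and $Dg(x)^{-1}\to Df(x)^{-1}$ uniformly. The cone conditions being open inequalities valid uniformly over the compact $M$, they persist, with a slightly weakened constant $\lambda'>1$, for every $g$ in a $C^1$-neighbourhood of $f$, using the \emph{same} fields $C^s,C^u$. To recover hyperbolicity of $g$, for any $g$-orbit $\x=(x_n)$ I would set
\[
E^u_g(\x):=\bigcap_{k\ge0}Dg^k(x_{-k})\,C^u_{x_{-k}},\qquad
E^s_g(x_0):=\bigcap_{k\ge0}\bigl(Dg^k(x_0)\bigr)^{-1}C^s_{x_k}.
\]
Forward invariance and expansion make the first a nested intersection of cones whose limit is a subspace of dimension $\dim E^u$ depending only on the pre-history $\x_{-}$, along which $Dg$ expands; backward invariance and contraction make the second a subspace depending only on $x_0$, along which $Dg$ contracts. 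Complementarity of the cones yields $T_{x_0}M=E^s_g(x_0)\oplus E^u_g(\x)$, which is exactly Definition \ref{defprz} for $g$.

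The one genuinely delicate point is the first step: certifying that a single cone field $C^u$ on $TM$ simultaneously captures \emph{all} unstable directions over each point, although these may form an uncountable set of lines spread through the Grassmannian. This is where the uniform transversality of $E^s$ to every unstable direction is indispensable. As an alternative, one may run the entire argument upstairs: by Proposition \ref{propMP} the lift $\overline{f}$ is an Anosov diffeomorphism of $\overline{M}$, a $C^1$-small perturbation of $f$ lifts to an equivariant $C^1$-small perturbation of $\overline{f}$, and the classical cone argument for diffeomorphisms applies verbatim, the non-compactness of $\overline{M}$ being harmless since all constants are uniform by equivariance over the compact quotient $M$.
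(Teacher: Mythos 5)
First, a structural remark: the paper does not prove this theorem at all --- it is quoted from Przytycki \cite{PRZ} and Ma\~{n}\'{e}--Pugh \cite{MP} --- so your proposal can only be measured against the standard arguments, not against anything in the text. Your fallback route (last paragraph) is correct and essentially reproduces the Ma\~{n}\'{e}--Pugh mechanism recorded here as Proposition \ref{propMP}: the lift $\overline{f}$ is an Anosov diffeomorphism with a genuine $D\overline{f}$-invariant splitting; that splitting is equivariant under deck transformations (which are isometries of the lifted metric), so angles and hyperbolicity constants are uniform on $\overline{M}$; narrow cone fields around $E^s_{\overline{f}}$ and $E^u_{\overline{f}}$ satisfy the classical one-step conditions, these persist for the equivariant, uniformly $C^1$-small lift $\overline{g}$ of a perturbation $g$, hence $\overline{g}$ is Anosov and, by Proposition \ref{propMP} again, $g$ is an Anosov endomorphism. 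Written out, that is a complete proof.

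Your main route, however, has a genuine gap, and it is not the one you flag. Containing all unstable directions in $C^u_x$ is indeed easy (uniform transversality plus compactness of $M^f$ and Theorem \ref{prz}). What fails is the claimed ``routine'' verification of the two one-step conditions. Since your cone is centred on a fixed, hence non-invariant, complement $E'$ of $E^s$, the derivative shears $E'$ into $E^s$: writing $Df(x)v'=\bar{A}_xv'+s_x(v')$ with $\bar{A}_xv'\in E'_{f(x)}$, $s_x(v')\in E^s_{f(x)}$, and $K=\sup_x\|s_x\|$, the aperture-$\beta$ cone is mapped into the cone of aperture $\lambda^{-2}\beta+K/\lambda$, so forward invariance \emph{forces} $\beta>K\lambda/(\lambda^2-1)$; this is also the size needed to contain the unstable directions. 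But on such a wide cone one-step norm expansion can genuinely fail, and an adapted metric with $C=1$ does not rule this out. Concretely, if at some point $Df(x)=\bigl(\begin{smallmatrix}\lambda^{-1}&K\\ 0&\lambda\end{smallmatrix}\bigr)$ in orthonormal coordinates adapted to $E^s_x\oplus E'_x$, with $\lambda=\sqrt{2}$, $K=1$, then the stable axis is contracted by $\lambda^{-1}$ and the unstable direction $(\sqrt{2},1)$ is expanded by $\lambda$ (so Definition \ref{defprz} holds with $C=1$), yet any symmetric cone containing $(\sqrt{2},1)$ also contains $v=(-\sqrt{2},1)$, and $\|Df(x)v\|=\sqrt{2}<\sqrt{3}=\|v\|$. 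So the two conditions you impose are incompatible as stated, and this is exactly the difficulty created by the absence of an invariant unstable bundle.

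The repair is standard but must be supplied, and any of three versions works: (i) pass to a high iterate $f^N$ keeping the aperture fixed --- the invariance threshold is the fixed point of $r\mapsto\lambda^{-2}r+K/\lambda$, hence independent of $N$, while the expansion window grows like $\sqrt{\lambda^{2N}-1}$, so for large $N$ both conditions hold for $f^N$ and openness of $f$ follows from openness of $f^N$; (ii) rescale the metric anisotropically, declaring $E^s$-vectors tiny compared with $E'$-vectors, which shrinks the effective shear and reopens the window; or (iii) keep the wide cone but replace norm expansion by expansion of the $E'$-component, $\|PDg(x)v\|\ge\lambda'\|Pv\|$ for $v\in C^u_x$ with $P$ the projection along $E^s$ --- this is precisely the Ma\~{n}\'{e}--Pugh condition of Definition \ref{defmp}, holds for every aperture, is $C^1$-open, and still yields Definition \ref{defprz} for $g$ with constant $C=\sqrt{1+\beta^2}$. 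With one of these repairs (or with your universal-cover alternative) your argument is sound; without one, the asserted one-step expansion on $C^u$ is false in general.
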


\begin{theorem} \label{prz} \cite{PRZ} Let $f: M \rightarrow M$ be an Anosov endomorphism, then the map

$$\tilde{x} \mapsto E^u (\tilde{x})$$
is continuous.
\end{theorem}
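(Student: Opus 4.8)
The plan is to realize $E^u(\x)$ as a nested intersection of forward images of an unstable cone field, and then to exploit the uniform exponential contraction of these cones to reduce continuity to a finite-depth statement that depends continuously on only finitely many coordinates of $\x$.

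First I would fix, using the uniform hyperbolicity of $f$, a continuous unstable cone field $\cC^u = \{\cC^u_x\}_{x\in M}$ on $TM$, transverse to the stable bundle $E^s$, with the property that $Df_x(\cC^u_x)\subset \operatorname{int}\cC^u_{f(x)}$ for every $x$ and that forward iteration expands the vectors inside the cones. For a pre-history $\x_- = (\ldots, x_{-1}, x_0)$ the associated unstable space is then characterized as
\[
E^u(\x) \;=\; \bigcap_{n\ge 0} Df^n_{x_{-n}}\bigl(\cC^u_{x_{-n}}\bigr),
\]
a decreasing intersection of cones based at $x_0$; this makes transparent that $E^u(\x)$ depends only on the pre-history $\x_-$, as already noted in the preliminaries.

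The key quantitative step is to show that the angular diameter of $Df^n_{x_{-n}}(\cC^u_{x_{-n}})$ tends to $0$ as $n\to\infty$ at a rate that is \emph{uniform} over all pre-histories. This follows from the uniform constants $C,\lambda$ in Definition~\ref{defprz} together with compactness of $M$: unstable vectors are expanded by at least $C^{-1}\lambda^n$ while the complementary directions inside the cone are comparatively contracted, so the cones shrink geometrically. Consequently, given $\eps>0$ there is $N=N(\eps)$, independent of $\x$, such that $E^u(\x)$ lies within $\eps$, in the natural metric on the Grassmannian of subspaces of the appropriate dimension, of the finite-depth approximation spanned by $Df^{N}_{x_{-N}}(\cC^u_{x_{-N}})$.

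Finally I would combine this with the fact that convergence $\x^{(k)}\to\x$ in $\widetilde{M}$ forces coordinate-wise convergence $x^{(k)}_{-j}\to x_{-j}$ for every fixed $j$. Since the $N$-step object $Df^{N}_{x_{-N}}(\cC^u_{x_{-N}})$ depends only on $x_{-N},\ldots,x_0$ through the continuous data $Df$ and the continuous cone field, for $k$ large its value at $\x^{(k)}$ is $\eps$-close to its value at $\x$; a triangle inequality then yields $\widetilde d(E^u(\x^{(k)}), E^u(\x)) \le 3\eps$ for all large $k$, giving continuity. The main obstacle is precisely the uniformity in the cone-contraction estimate, namely that $N(\eps)$ can be chosen independently of the pre-history; this is where uniform hyperbolicity and compactness of $M$ enter, and it is exactly what permits the truncation to a finite, continuously-varying amount of data. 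Alternatively, one may lift to the universal cover and invoke Proposition~\ref{propMP} to reduce the claim to the classical continuity of $E^u_{\overline f}$ for the Anosov diffeomorphism $\overline f$, pushing forward by $Dp$; I prefer the cone argument here since it operates directly on pre-histories and avoids tracking the choice of lift.
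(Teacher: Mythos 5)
The paper contains no proof of this theorem to compare yours against: it is imported verbatim from Przytycki, with only the citation \cite{PRZ} attached. Your cone-field argument is correct and is essentially the classical proof of this fact: realize $E^u(\tilde{x})$ as the nested intersection $\bigcap_{n\ge 0} Df^n_{x_{-n}}(\mathcal{C}^u_{x_{-n}})$, prove that the angular diameter of these image cones decays geometrically at a rate uniform over all pre-histories (this comes from the uniform constants $C,\lambda$ of Definition \ref{defprz}), then truncate at a depth $N(\varepsilon)$ independent of $\tilde{x}$, where everything depends continuously on finitely many coordinates. Two points in your outline deserve to be argued rather than asserted. First, the cone field must be wide enough to contain \emph{every} unstable direction of every pre-history; this needs the a priori uniform transversality $\angle(E^u(\tilde{x}),E^s(x_0))\ge\theta_0>0$, which one extracts from the constants of Definition \ref{defprz} applied to a single fixed, large iterate, and strict $Df$-invariance plus expansion inside such wide cones may require replacing $f$ by a power or adapting the metric. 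Second, the continuity of $E^s$, which you use implicitly when building cones transverse to it, is available because of Definition \ref{defmp} and the Sakai equivalence \cite{SA}. Both are standard and do not affect correctness.

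One caveat concerns your closing remark: the alternative route via Proposition \ref{propMP} --- pushing forward the continuity of $E^u_{\overline{f}}$ by $Dp$ --- does not work as stated, so it is fortunate that you did not rely on it. The fibre $p^{-1}(x)$ is countable, hence orbits of $\overline{f}$ through lifts of $x$ account for only countably many pre-histories of $x$, whereas a point of $M$ may have uncountably many unstable directions; this is precisely why the paper's proposition reads $\mathcal{E}^u_f(x)=\overline{\bigcup_{p(y)=x} Dp(y)\cdot (E^u_{\overline{f}}(y))}$ with a closure that cannot be dropped. Continuity of $y\mapsto E^u_{\overline{f}}(y)$ on $\overline{M}$ therefore gives information only on that countable family of pre-histories, not on all of $M^f$, and an extra limiting argument would be needed. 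In other words, your preference for the cone argument is not merely aesthetic; it is necessary.
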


\begin{definition}
Let $f: M \rightarrow M$ be an Anosov Endomorphism, Denote by $\mathcal{E}^u_f(x) := \displaystyle\bigcup_{\tilde{x}: \pi(\tilde{x}) = x} E^u(\tilde{x})$,  union of all unstable direction defined on $x.$
\end{definition}
Considering the definitions \ref{defmp} and \ref{defprz} a natural question arises: What is the relation between $\mathcal{E}^u_f(x)$ and $ \bigcup_{y \in p^{-1}(x)} Dp(E^u_{\overline{f}}(y)) ?$

Observe that $\mathcal{E}^u_f(x)$ is not necessarily $\displaystyle\bigcup_{\pi(y) = x} Dp(y) \cdot ( E^u_{\widetilde{f}}(y)).$ Indeed, the latter is a countable union and the former may be uncountable  (see \cite{PRZ}.)

\begin{proposition}Let $f: M \rightarrow M$ be an Anosov Endomorphism, then

$$ \mathcal{E}^u_f(x)=\overline{\displaystyle\bigcup_{p(y) = x} Dp(y) \cdot( E^u_{\overline{f}}(y))}.$$

\end{proposition}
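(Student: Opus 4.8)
The plan is to prove the two inclusions separately, the inclusion $\supseteq$ being soft and the inclusion $\subseteq$ carrying the real content. I would first record that $\mathcal{E}^u_f(x)$ is \emph{closed}. The set of pre-histories with $x_0=x$ is a compact subset of $\widetilde{M}_-$, and by Theorem \ref{prz} the map $\tilde{x}\mapsto E^u(\tilde{x})$ is continuous into the Grassmannian of $T_x M$; since the union of a compact family of subspaces is closed (if $v_n\in E^u(\tilde x_n)$ with $v_n\to v$, pass to a convergent subsequence $\tilde x_n\to\tilde x$ and use $E^u(\tilde x_n)\to E^u(\tilde x)$ to get $v\in E^u(\tilde x)$), the union $\mathcal{E}^u_f(x)$ is closed. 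For the inclusion $\supseteq$ it then suffices to observe that for each $y$ with $p(y)=x$ the full $\overline f$-orbit $\mathcal{O}(y)$ projects to an $f$-orbit $\tilde x^y=p(\mathcal{O}(y))$ with $(\tilde x^y)_0=x$, and that the identity recorded just before Proposition \ref{propMP} gives $Dp(y)\cdot E^u_{\overline f}(y)=E^u(\tilde x^y)\subseteq\mathcal{E}^u_f(x)$. Taking closures and using that $\mathcal{E}^u_f(x)$ is closed yields $\overline{\bigcup_{p(y)=x}Dp(y)\cdot E^u_{\overline f}(y)}\subseteq\mathcal{E}^u_f(x)$.

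For the inclusion $\subseteq$, the key point is a \emph{finite realization} statement: for every pre-history $(\ldots,x_{-1},x_0=x)$ and every $N$, there is $y\in p^{-1}(x)$ with $p(\overline f^{-j}(y))=x_{-j}$ for $0\le j\le N$. I would prove this by induction on $N$ using the deck group $\Gamma$ of $p:\overline M\to M$ and the induced homomorphism $f_*:\Gamma\to\Gamma$ characterised by $\overline f\,\gamma=f_*(\gamma)\,\overline f$ (equivalently $\overline f^{\,N}\gamma\,\overline f^{\,-N}=f_*^N(\gamma)\in\Gamma$, since any two lifts of $f$ differ by a deck transformation). Given $y$ realizing the segment up to $N$, the preimages of $x_{-N}$ under $f$ are exactly $\{\,p(\overline f^{-1}(\delta\,\overline f^{-N}y)):\delta\in\Gamma\,\}$, so some $\delta$ sends $\overline f^{-1}(\delta\,\overline f^{-N}y)$ onto the prescribed lift of $x_{-(N+1)}$; replacing $y$ by $y'=f_*^N(\delta)\,y\in p^{-1}(x)$ one checks $\overline f^{-j}y'=f_*^{N-j}(\delta)\,\overline f^{-j}y$ for $j\le N$, so the projected segment up to coordinate $-N$ is unchanged while the $(N{+}1)$-st coordinate becomes $x_{-(N+1)}$, closing the induction.

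Granting finite realization, the conclusion follows. Since two pre-histories agreeing on $(x_{-N},\ldots,x_0)$ are at most $2^{-N}\diam(M)$ apart in $\widetilde d$, for any pre-history $\tilde x$ and any $\eps>0$ I can pick $N$ and then $y\in p^{-1}(x)$ with $\tilde x^y$ within $\eps$ of $\tilde x$; by the continuity of $\tilde x\mapsto E^u(\tilde x)$ the subspaces $E^u(\tilde x^y)=Dp(y)\cdot E^u_{\overline f}(y)$ converge to $E^u(\tilde x)$ in the Grassmannian, so every vector of $E^u(\tilde x)$ is a limit of vectors lying in $\bigcup_{p(y)=x}Dp(y)\cdot E^u_{\overline f}(y)$. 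Hence $E^u(\tilde x)\subseteq\overline{\bigcup_{p(y)=x}Dp(y)\cdot E^u_{\overline f}(y)}$, and taking the union over all pre-histories gives $\mathcal{E}^u_f(x)\subseteq\overline{\bigcup_{p(y)=x}Dp(y)\cdot E^u_{\overline f}(y)}$. I expect the main obstacle to be precisely the finite realization lemma: the bookkeeping that conjugation by $\overline f$ maps $\Gamma$ into $\Gamma$ via $f_*$, so that modifying a lift by $f_*^N(\delta)$ alters only the tail of the projected pre-history beyond coordinate $-N$, is what makes the induction and the density argument go through.
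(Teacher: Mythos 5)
Your proof is correct, and its overall skeleton coincides with the paper's: closedness of $\mathcal{E}^u_f(x)$ via Theorem \ref{prz}, the trivial inclusion, a finite-realization statement for pre-history segments, and a continuity argument in the Grassmannian using $Dp(y)\cdot E^u_{\overline{f}}(y)=E^u(p(\mathcal{O}(y)))$ together with the estimate $\tilde{d}\le 2^{-N}\diam(M)$. The one place you genuinely diverge is the step you flagged as the main obstacle, the finite-realization lemma, which you prove by induction on $N$ with deck transformations and the induced homomorphism $f_*\colon\Gamma\to\Gamma$; your bookkeeping (the commutation $\overline{f}^{\,-j}f_*^N(\delta)=f_*^{N-j}(\delta)\,\overline{f}^{\,-j}$, transitivity of $\Gamma$ on fibers) is correct, but it is much heavier than necessary. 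The paper disposes of this step in one line: a finite pre-history segment $(x_{-k},\ldots,x_{-1},x_0=x)$ is nothing but the forward $f$-orbit of its earliest point $x_{-k}$, so one may choose \emph{any} lift $y_{-k}\in p^{-1}(x_{-k})$ and push it forward; the semiconjugacy $p\circ\overline{f}=f\circ p$ then forces $p(\overline{f}^{\,j}(y_{-k}))=x_{-k+j}$ for $0\le j\le k$, and $y:=\overline{f}^{\,k}(y_{-k})\in p^{-1}(x)$ realizes the segment. The structural point is that the freedom in choosing a lift should be spent at the earliest coordinate, where it costs nothing, rather than at coordinate $0$, where it must be propagated backwards through the deck group as you do; what your version buys in exchange is only the minor extra information of exactly which deck translate of a given lift of $x$ realizes each extension, which the proposition does not need.
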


\begin{proof}

First of all we would like to mention that $ \mathcal{E}^u_f(x)$ is a closed subset of $u-$dimensional grassmanian of $T_xM.$ This is an immediate corollary of theorem \ref{prz}. Clearly $\displaystyle\bigcup_{\pi(y) = x} Dp(y) \cdot( E^u_{\widetilde{f}}(y)) \subset \mathcal{E}^u_f(x). $  So,  $\overline{\displaystyle\bigcup_{\pi(y) = x} Dp(y) \cdot( E^u_{\overline{f}}(y)} )\subseteq \mathcal{E}^u_f(x).$
Now for reciprocal inclusion,
let $E^u(\x)$ be an unstable direction of $x \in M.$ We want to prove that $E \in \overline{\displaystyle\bigcup_{p(y) = x} Dp(y) \cdot( E^u_{\overline{f}}(y))}.$

We claim that given any finite pre-history $(x_{-k}, \ldots , x_{-2}, x_{-1}, x=x_0 ),$ there is a finite piece of $\overline{f}-$orbit  $(y_{-k}, \cdots, \overline{f}^k(y_{-k})),$ which projects on $(x_{-k}, \ldots , x_{-2}, x_{-1}, x ),$ that is $$\pi( \overline{f}^{j}y_{-k}) = x_{-k+j}, j \in \{1, \cdots, k\}.$$

 Indeed, choose any $y_{-k} \in \overline{M},$ such that $p(y_{-k}) = x_{-k}.$ As $p\circ \overline{f} = f \circ p,$ the piece of orbit of $y_{-k}$ by $\overline{f}$ orbit projects on  $(x_{-k}, \ldots , x_{-2}, x_{-1}, x ).$

Now for each $k$ consider  $\mathcal{O}(y_{-k}),$ the full orbit by $\overline{f}$ of $y_{-k}.$ It is clear that $p(\mathcal{O}(y_{-k}))$ converges to $\tilde{x}$ in $M^f.$ Recall that
\begin{equation} \label{kk}
E^u(p(\mathcal{O}(y_{-k}))) = Dp (E^u(\overline{f}^k(y_{-k}))).
\end{equation}

  By continuity argument (theorem \cite{PRZ}) we have
  $$
  E^u(\pi(\mathcal{O}(y_{-k}))) \rightarrow E^u(\tilde{x})
  $$
  and using \ref{kk} we obtain
  $$
  Dp (E^u(\overline{f}^k(y_{-k}))) \rightarrow E^u(\tilde{x})
  $$
  which completes the proof.

\end{proof}

The next lemma is  useful for the rest of this paper.

\begin{lemma}\label{angle} Suppose that $f: M \rightarrow M$ is an Anosov endomorphism, such that  there are two different unstable directions $E^u_1$ and $E^u_2$ at $x,$ then the angle $\angle(Df^n(x)(E^u_1), Df^n(x)(E^u_2) ),$ goes to zero when $n \rightarrow + \infty.$
\end{lemma}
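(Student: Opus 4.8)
The plan is to realize each of the two directions as the value of the continuous unstable bundle at a genuine orbit, and then to exploit the fact that \emph{forward} iteration by $f$ erases the difference between two pre-histories of the same point. Concretely, since $E^u_1\neq E^u_2$ are unstable directions at $x$, by definition there are orbits $\tilde{x}_1=(x^1_n)_{n}$ and $\tilde{x}_2=(x^2_n)_{n}$ in $\widetilde{M}$ with $\pi(\tilde{x}_1)=\pi(\tilde{x}_2)=x$ and $E^u_i=E^u(\tilde{x}_i)$. The $Df$-invariance of the splitting in Definition \ref{defprz} says precisely that $Df(x)\cdot E^u(\tilde{x}_i)=E^u(\widetilde{f}\,\tilde{x}_i)$, where $\widetilde{f}$ denotes the shift (natural extension) on $\widetilde{M}$; iterating gives $Df^n(x)\cdot E^u_i=E^u(\widetilde{f}^{\,n}\tilde{x}_i)$ for $i=1,2$. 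Since $f$ is a local diffeomorphism, $Df^n(x)$ is invertible, so these are genuine $u$-planes, and the angle to be estimated is exactly $\angle\big(E^u(\widetilde{f}^{\,n}\tilde{x}_1),\,E^u(\widetilde{f}^{\,n}\tilde{x}_2)\big)$, an angle between two unstable directions over the common point $f^n(x)$.

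Next I would compare the two shifted orbits. Because $f$ is a map, the nonnegative coordinates of an orbit are determined by the zeroth one, so $x^1_m=f^m(x)=x^2_m$ for every $m\ge 0$; the orbits $\tilde{x}_1,\tilde{x}_2$ can differ only in their pre-histories. Unwinding the shift, the coordinate of $\widetilde{f}^{\,n}\tilde{x}_i$ at index $m$ is $x^i_{n+m}$, so its pre-history reads $\big(f^n(x),f^{n-1}(x),\dots,f(x),x,\,x^i_{-1},x^i_{-2},\dots\big)$. Hence the pre-histories of $\widetilde{f}^{\,n}\tilde{x}_1$ and $\widetilde{f}^{\,n}\tilde{x}_2$ coincide in their first $n+1$ entries and can disagree only from index $-(n+1)$ onward. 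With the pre-history metric $\widetilde{d}$ this yields
\[
\widetilde{d}\big((\widetilde{f}^{\,n}\tilde{x}_1)_-,\,(\widetilde{f}^{\,n}\tilde{x}_2)_-\big)\;\le\;\sum_{k=n+1}^{\infty}\frac{\diam(M)}{2^{k}}\;=\;\frac{\diam(M)}{2^{n}},
\]
which tends to $0$ as $n\to\infty$; that is, the two pre-histories converge to one another.

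Finally I would invoke continuity. The direction $E^u(\tilde{y})$ depends only on the pre-history $\tilde{y}_-$, and the map $\tilde{y}\mapsto E^u(\tilde{y})$ is continuous on the compact space $\widetilde{M}_-$ by Theorem \ref{prz}, hence uniformly continuous; applying this to the two pre-histories above, whose $\widetilde{d}$-distance goes to $0$, forces $\angle\big(E^u(\widetilde{f}^{\,n}\tilde{x}_1),E^u(\widetilde{f}^{\,n}\tilde{x}_2)\big)\to 0$, since small distance in the $u$-Grassmannian means small angle. This is the claim. The point that really needs care — and which is the crux of the argument — is the bookkeeping of the previous paragraph: the observation that forward iteration aligns more and more of the two pre-histories is what converts a difference at $x$ into coincidence in the limit; once that is set up, uniform continuity does the rest. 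I note that the argument is only qualitative and uses no rate; one could alternatively read off an explicit exponential rate for the angle directly from the constants $C,\lambda$ of Definition \ref{defprz}.
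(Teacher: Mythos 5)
Your proof is correct, but it takes a genuinely different route from the paper's. The paper argues by pure linear algebra plus hyperbolicity at the point $x$ itself: picking a vector $u_1 \in E^u_2$ outside $E^u_1$, the dimension count $\dim(\mathrm{span}(E^u_1 \cup \{u_1\})) + \dim E^s > \dim T_xM$ produces a nonzero stable vector $v_s = cu_1 + v$ with $c \neq 0$, $v \in E^u_1$; since $\|Df^n(x)v_s\| \to 0$ while $\|Df^n(x)u_1\|, \|Df^n(x)v\| \to \infty$, the direction $[Df^n(x)u_1]$ is forced to collapse onto $Df^n(x)E^u_1$, and the angle goes to zero (with an implicit exponential rate coming from the constants $C, \lambda$ of Definition \ref{defprz} --- this is exactly the quantitative alternative you mention at the end). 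You instead work in the natural extension: the invariance $Df^n(x)E^u(\tilde{x}_i) = E^u(\widetilde{f}^{\,n}\tilde{x}_i)$, the observation that forward shifting aligns the first $n+1$ entries of the two pre-histories so that $\widetilde{d} \leq \diam(M)2^{-n}$, and then Przytycki's continuity theorem (Theorem \ref{prz}) on the compact space of pre-histories. Your approach is softer and more conceptual --- it explains the lemma as a consequence of continuity of $\tilde{x} \mapsto E^u(\tilde{x})$ plus convergence of orbits in the inverse limit --- but it is purely qualitative and leans on the (nontrivial) continuity theorem, whereas the paper's argument is self-contained and rate-producing. One point to phrase carefully in your version: uniform continuity of $\tilde{y} \mapsto E^u(\tilde{y})$ must be interpreted in the Grassmannian bundle, since unstable directions over different base points cannot be compared by an angle; this causes no trouble here because both shifted orbits project to the same point $f^n(x)$, and one can also bypass the issue entirely with a compactness/subsequence argument, but it deserves a sentence.
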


\begin{proof}  In fact, suppose that $dim(E^s) = k, \dim(E^u) = n .$  Suppose that $E^u_1(x) \neq E^u_2(x),$ for  $x \in M.$ Consider $\{v_1, \ldots, v_n\}$ and $\{u_1, \ldots, u_n\}$ basis for $E^u_1(x)$ and $E^u_2(x)$ respectively. Since $E^u_1 (x) \neq E^u_2 (x)$ there is $u_i,$ say $u_1,$ such that $B = \{u_1, v_1, \ldots, v_n\}$ is a linearly independent set.

 Let $E :=  < u_1, v_1, \ldots, v_n >$ with $dim(E) = n+ 1$ be the subspace generated by $B.$ Observe that $dim(E) + dim(E^s) = n + k + 1 > n+k = dim(T_xM).$ This implies that $E \cap E^s$ is non trivial. Let $0 \neq v_s \in E \cap E^s,$ we have

$$v_s =  cu_1 + v, $$

where   $c \neq 0$ and $ v \in E^u_{1} (x).$

Considering the following properties of vectors in stable and unstable bundles:
\begin{itemize}
\item $||Df^n(x) v_s|| \rightarrow 0,$
 $||Df^n(x) u_1|| \rightarrow + \infty,$
\quad \text{and} \quad $||Df^n(x) v ||\rightarrow + \infty$
\end{itemize}
 It come out  that $\angle([Df^n(x)u_1], Df^n(x)E^u_1(x)) \rightarrow 0.$ In fact the same argument shows that $\angle([Df^n(x)u_i], Df^n(x)E^u_1(x)) \rightarrow 0,$ for all $u_i$ not in $E^u_1(x).$ Thus  $$\lim_{n \rightarrow \infty}\angle(Df^n(x)(E^u_1 (x)), Df^n(x)(E^u_2(x)) )= 0.$$

\end{proof}

\section{Proof of the Theorem \ref{teo1}.}

In the course of the proof of the main result we need to analyse the number of unstable directions as a function of $x \in M$ as follows:
 Let $u: M \rightarrow  \mathbb{N} \cup \{\infty\}$ be defined as  $$u(x) := \#( \mathcal{E}_f^u(x)),$$ which assigns to each $x$ the``number" of all possible unstable directions at $T_x{M}$.

A simple and useful remark is the following:
\begin{lemma} \label{nondecreasing}
 $u(x)$ is non-decreasing along the forward orbit of $x.$
\end{lemma}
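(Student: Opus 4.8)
The plan is to exhibit an explicit injection from $\mathcal{E}^u_f(x)$ into $\mathcal{E}^u_f(f(x))$ induced by the derivative $Df(x)$, which immediately yields $u(x) \le u(f(x))$.

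First I would recall that since $f$ is a local diffeomorphism, $Df(x) : T_x M \to T_{f(x)} M$ is a linear isomorphism. The central observation is that $Df$ carries unstable directions to unstable directions compatibly with the shift on the inverse limit space: given an $f$-orbit $\x = (x_n)_{n \in \mathbb{Z}}$ with $x_0 = x$, the shifted orbit $\sigma\x = (x_{n+1})_{n \in \mathbb{Z}}$ satisfies $\pi(\sigma\x) = x_1 = f(x)$, and by the $Df$-invariance of the splitting in Definition \ref{defprz} one has $Df(x) \cdot E^u(\x) = E^u(\sigma\x)$. In particular $Df(x) \cdot E^u(\x)$ is a genuine unstable direction at $f(x)$, so the assignment $E \mapsto Df(x) \cdot E$ defines a map $\Phi : \mathcal{E}^u_f(x) \to \mathcal{E}^u_f(f(x))$.

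Next I would verify that $\Phi$ is injective, which is where the invertibility of $Df(x)$ is used: if $E_1, E_2 \in \mathcal{E}^u_f(x)$ are distinct $u$-dimensional subspaces of $T_x M$, then because $Df(x)$ is a linear isomorphism it sends them to distinct $u$-dimensional subspaces $Df(x) E_1 \ne Df(x) E_2$ of $T_{f(x)} M$. Hence $\Phi$ is injective, and comparing cardinalities gives $u(x) = \#\mathcal{E}^u_f(x) \le \#\mathcal{E}^u_f(f(x)) = u(f(x))$. Iterating along the forward orbit yields the asserted monotonicity; the argument is also consistent with (and trivially covers) the value $\infty$.

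The only point requiring care — more a verification than a genuine obstacle — is the identity $Df(x)\cdot E^u(\x)=E^u(\sigma\x)$, i.e. that the derivative maps the unstable space of an orbit onto the unstable space of the shifted orbit. This is precisely the invariance of the bundle built into Definition \ref{defprz}, together with the fact (noted in the preliminaries) that $E^u(\x)$ depends only on the orbit. I would emphasize that $\Phi$ need not be surjective, since $f(x)$ may admit prehistories whose predecessor is not $x$; but surjectivity is irrelevant for the inequality we want.
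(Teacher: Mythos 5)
Your proof is correct and is essentially the paper's own argument, just written out in full detail: the paper's proof is a two-line remark invoking exactly the same facts, namely that $f$ is a local diffeomorphism with $Df(x)$ injective and that every prehistory of $x$ extends (via the shift) to a prehistory of $f(x)$, so $E \mapsto Df(x)\cdot E$ injects $\mathcal{E}^u_f(x)$ into $\mathcal{E}^u_f(f(x))$. Your closing caveat about non-surjectivity also mirrors the paper's remark that it is unclear whether $u$ is constant along the orbit.
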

\begin{proof}
It is enough to use that $f$ is a local diffeomorphisms and $Df(x)$ is injective. However, we emphasize that it is not clear whether $u(x)$ is constant or not along the orbit. This is because, all the pre history of $x$ is included in the prehistory of $f(x).$
\end{proof}

%
%
%
%
%
%
%
%
%
%
%
%
%
%

\begin{proposition}\label{quase} Let $f: M \rightarrow M$ be a transitive Anosov endomorphism,  then either there is $x \in M$ such that $u(x) = \infty,$ or $u$ is uniformly bounded on $M,$ in fact in this case $f$ is an special Anosov endomorphism.
\end{proposition}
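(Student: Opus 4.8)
The plan is to prove the contrapositive form: assuming that $u(x)<\infty$ for \emph{every} $x\in M$, I will show that $f$ is special, i.e.\ $u\equiv 1$ (which in particular makes $u$ uniformly bounded). The engine of the argument is the monotonicity of $u$ along forward orbits (Lemma \ref{nondecreasing}) combined with transitivity, and the collapse of angles from Lemma \ref{angle}; the glue is a pair of semicontinuity properties of $u$ that I establish first. I would begin by recording that the map $\widetilde{M}\to M\times \mathrm{Gr}(u,TM)$, $\tilde x\mapsto(\pi(\tilde x),E^u(\tilde x))$, is continuous by Theorem \ref{prz} and has compact domain, so its image is closed and its fibre over $x$ is exactly $\mathcal{E}^u_f(x)$. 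Closedness of the image yields \emph{upper semicontinuity} of $x\mapsto\mathcal{E}^u_f(x)$: if $x_j\to x$ and $w_j\in\mathcal{E}^u_f(x_j)$ with $w_j\to w$, lifting to $\widetilde M$ and passing to a convergent subsequence gives $w\in\mathcal{E}^u_f(x)$. For \emph{lower semicontinuity} of the counting function $u$, I would use that $\pi$ is an open map: since $f$ is a surjective local diffeomorphism, preimages under $\pi$ of points near $x_0$ can be produced near any prescribed pre-history, so if $x$ carries $k$ distinct directions then every nearby point carries at least $k$. Hence each $\{u\ge k\}$ is open, and $u$ attains its minimum while any value attained on a dense set dominates $u$ everywhere.

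Next I run the dichotomy through a transitive point. Fix a transitive point $p$; by Lemma \ref{nondecreasing} the integer sequence $u(f^n p)$ is non-decreasing, hence increases to some $L\in\mathbb{N}\cup\{\infty\}$. If $L=\infty$, then for each $K$ the open set $\{u\ge K\}$ contains a tail of the dense forward orbit of $p$, so it is open \emph{and} dense; by the Baire category theorem $\bigcap_K\{u\ge K\}$ is nonempty and consists of points with $u=\infty$, contradicting the standing assumption. Therefore $L<\infty$. Since $u$ is eventually $L$ along the orbit and the corresponding tail is still dense, lower semicontinuity forces $u\le L$ on all of $M$, so $u$ is uniformly bounded with $\max_M u=L$.

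It remains to upgrade boundedness to specialness. The level set $\{u=L\}=\{u\ge L\}$ is open (lower semicontinuity), nonempty, and forward invariant (monotonicity), hence dense: by transitivity any nonempty open $V$ satisfies $f^n(\{u=L\})\cap V\neq\varnothing$ for some $n\ge0$, and forward invariance gives $\{u=L\}\cap V\neq\varnothing$. Suppose, for contradiction, that $L\ge 2$. Choose a transitive (hence forward-recurrent) point $x_0\in\{u=L\}$; it carries exactly $L\ge 2$ distinct unstable directions $E_1,\dots,E_L$. Their images $Df^n(x_0)E_i$ are $L$ distinct unstable directions at $f^n(x_0)$ by injectivity of $Df^n$ and $Df$-invariance along inverse-limit orbits, so $\mathcal{E}^u_f(f^n(x_0))=\{Df^n(x_0)E_i\}_i$; by Lemma \ref{angle} all pairwise angles tend to $0$, hence $\diam\,\mathcal{E}^u_f(f^n(x_0))\to 0$. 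On the other hand, pick $n_j\to\infty$ with $f^{n_j}(x_0)\to x_0$ by recurrence: lower semicontinuity at $x_0$ (applied to the two fixed distinct directions $E_1,E_2$) forces $\mathcal{E}^u_f(f^{n_j}(x_0))$ to contain, for large $j$, directions within $\varepsilon$ of $E_1$ and of $E_2$, so its diameter stays bounded below by a positive constant. This contradicts $\diam\,\mathcal{E}^u_f(f^{n_j}(x_0))\to 0$. Hence $L=1$, that is $u\equiv 1$, and $f$ is special.

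The step I expect to be the genuine obstacle is this final upgrade, where mere finiteness must be turned into the single-direction conclusion. The two delicate points there are the openness of $\pi$ that underlies lower semicontinuity, and the careful interplay between the \emph{upper} control coming from the angle collapse (diameters shrink along the forward orbit) and the \emph{lower} control coming from recurrence together with lower semicontinuity (distinct directions must persist at nearby points). Verifying that the images $Df^n(x_0)E_i$ remain genuine unstable directions at $f^n(x_0)$, and that recurrence of a transitive point supplies the needed return times, are the auxiliary facts to pin down carefully.
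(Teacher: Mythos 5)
Your proof is correct, and it rests on the same pillars as the paper's: continuity of $\tilde x \mapsto E^u(\tilde x)$ (Theorem \ref{prz}) yielding semicontinuity of $u$, monotonicity of $u$ along forward orbits (Lemma \ref{nondecreasing}), the angle collapse (Lemma \ref{angle}), transitivity, and Baire category. The differences are in execution. For the uniform bound, the paper applies Baire to the closed sets $\Lambda_k=\{u\le k\}$ to get $\mathrm{int}(\Lambda_{k_0})\ne\emptyset$ and then upgrades to $M=\Lambda_{k_0}$ using a dense orbit and monotonicity; you instead set $L=\lim_n u(f^n p)$ along a transitive orbit, use Baire only to exclude $L=\infty$, and spread the bound $u\le L$ over $M$ by density of orbit tails --- same content, lighter packaging. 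The genuine divergence is the passage from boundedness to specialness. The paper shows that $u$ must strictly increase at well-chosen times: when the transitive orbit enters a neighbourhood where two directions stay $\alpha/2$-separated, all of its pushed-forward directions have already clustered within $\alpha/3$ by Lemma \ref{angle}, so a new direction appears, and iterating this increment contradicts the bound $k_0$. You instead take a transitive, hence forward-recurrent, point $x_0$ in the open set $\{u=L\}$ and exploit the exact equality $\mathcal{E}^u_f(f^n x_0)=Df^n(x_0)\cdot\mathcal{E}^u_f(x_0)$, valid precisely because $u$ attains its maximum $L$ at $x_0$; then $\operatorname{diam}\mathcal{E}^u_f(f^n x_0)\to 0$ by Lemma \ref{angle} (finitely many pairs), while recurrence plus lower semicontinuity keeps this diameter bounded below at return times --- a one-shot contradiction. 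Your variant buys a cleaner ending: the exactness of the direction count removes the bookkeeping of the paper's iterated-increment construction (including the implicit fact that each new basepoint $f^{n_k}(x_0)$ again serves as a transitive point), at the mild price of invoking forward recurrence of transitive points, which you justify correctly via density of orbit tails. Both arguments turn on the same tension: images of unstable directions cluster under $Df^n$, while distinct directions persist at all nearby points.
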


\begin{proof} Suppose that  $u(x) < \infty$ for all $x \in M.$    Define the sets

$$\Lambda_k =\{x \in M\, | \, u(x) \leq k \}.$$

The sets $\Lambda_k$ are closed. Indeed by continuity argument (theorem \ref{prz}) it comes out that $M \setminus \Lambda_k$ is open. Now observe that
$$ M = \displaystyle\bigcup_{k=1}^{+\infty}\Lambda_k, $$
by Baire  Category theorem,  there is $k_0 \geq 1$ such that $int(\Lambda_{k_0}) \neq \emptyset.$

 Now we claim that $$M= \Lambda_{k_0}.$$ To prove the claim, take $x$ arbitrary in $M$ with $l$ unstable directions and $V_x$ a small neighbourhood  of $x$ such that any point in $V_x$ has at least $l$ unstable direction.  Consider a point with dense orbit in $V_x$ and take an iterate of it that belongs to $\Lambda_{k_0}.$
 By lemma \ref{nondecreasing} we conclude that $l \leq k_0$ and this yields $M = \Lambda_{k_0}.$

Finally, we prove that $ M = \Lambda_1$ implying that $f$ is an special Anosov endomorphism. Suppose that, there is $x \in M$ such that $u(x) \geq 2$ and choose $E^u_1(x), E^u_2(x)$ two different unstable directions at $T_x(M).$ Let  $\alpha > 0 $ be the angle between $E^u_1(x)$ and $E^u_2(x).$

 Consider $U_x$ a small neighbourhood of $x,$ such that every $y \in U_x$ has at least two unstable directions, say $E^u_1(y)$ and $E^u_2(y),$ with $\angle(E^u_1(y), E^u_2(y)) > \displaystyle\frac{\alpha}{2}.$

 Let $x_0$ be a point with dense orbit.  Let $n_1$ be a large number satisfying
 \begin{itemize}
 \item $f^{n_1}(x_0) \in U_x, $
 \item $\angle (Df^{n_1}(x_0)\cdot E,Df^{n_1}(x_0)\cdot F ) < \displaystyle\frac{\alpha}{3} ,$ for any $E,F \in \mathcal{E}^u_f(x_0).$
 \end{itemize}
 The choice of $n_1$ is possible thanks to denseness of the forward orbit of $x_0$ and lemma \ref{angle}.
 By definition of $U_x$ the two above properties imply that  either $E^u_1(f^{n_1}(x_0))$ or $E^u_2(f^{n_1}(x_0))$ is not contained in $Df^{n_1}(x_0)\cdot\mathcal{E}^u_f(x_0).$  So, we obtain
  $$u(f^{n_1}(x_0)) > u(x_0) + 1.$$ By repeating this argument, it is possible to obtain an infinite sequence $ f^{n_k}(x_0)$ such that $$u(f^{n_{k+1}}(x_0)) \geq u(f^{n_{k}}(x_0)) + 1,$$ contradicting $M = \Lambda_{k_0}.$

\end{proof}

%
%
%
%
%
%
%
%

\subsection{Ending the Proof of Theorem \ref{teo1}}
To finalize the proof of the theorem \ref{teo1} it remains to prove that $u(x) = \infty,$ for a residual set $\mathcal{R} \subset M,$ whenever $f$ is not special Anosov endomorphism.
In fact, suppose that there is $x \in M,$ such that $u(x) = + \infty.$ Given $k > 0,$ fix exactly $k$ different unstable directions at $x,$ and $U^k_x$ a neighbourhood of $x,$ such that $u(y) \geq k,$ for every $y \in U^k_x.$ Now, since $f$ is transitive, the open set  $V^k = \displaystyle\bigcup_{i \geq 0} f^i(U^k_x) $ is dense in $M.$ Finally, consider

$$\mathcal{R} := \bigcap_{k \geq 1} U_k,$$

which is a residual set. By construction, given $x \in \mathcal{R}$ we have  $u(x) \geq k$  for every $k > 1,$ which implies $u(x) = +\infty.$ The completes the  proof of theorem \ref{teo1}.

\section{ Proof of Theorem \ref{teo2}.}

Given $f: \mathbb{T}^n \rightarrow \mathbb{T}^n$ an Anosov endomorphism, by the proposition \ref{propMP}, the lift $\overline{f}: \mathbb{R}^n \rightarrow \mathbb{R}^n$ is an Anosov diffeomorphism.

Let $f_*: \mathbb{T}^n \rightarrow \mathbb{T}^n$ be the linearisation of $f$. By linearisation of $f$ we mean the unique linear endomorphism of torus and homotopic to $f.$ By Theorem 8.1.1 in \cite{AH}, the linearisation map is hyperbolic.

Although $\mathbb{R}^n$ is not compact, since $\overline{f}$ preserves $\mathbb{Z}^n,$  the derivatives of $\overline{f}$ are periodic in fundamental compact domains of $\mathbb{T}^n.$ This periodicity allows us to prove, in the $\mathbb{R}^n$ setting, analogous results to Anosov diffeomorphisms in compact case.

\begin{lemma}\label{abscont} Let $f: \mathbb{T}^n \rightarrow \mathbb{T}^n$ be a $C^{1+\alpha}-$ Anosov endomorphism. Then for $\overline{f}: \mathbb{R}^n \rightarrow \mathbb{R}^n$ there exist $\mathcal{F}^u_{\overline{f}}$  and $\mathcal{F}^s_{\overline{f}}$ transversally absolutely continuous foliations tangent to $E^u_{\overline{f}}$ and $E^s_{\overline{f}}$ respectively.
\end{lemma}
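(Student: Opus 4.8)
The plan is to reduce the statement to the classical Anosov--Sinai absolute continuity theorem for $C^{1+\alpha}$ Anosov diffeomorphisms, exploiting the fact that on the universal cover all the relevant geometric data are $\ZZ^n$-periodic and hence uniform. By Proposition \ref{propMP} the lift $\overline{f}\colon\RR^n\to\RR^n$ is an Anosov diffeomorphism, so we have a continuous $D\overline{f}$-invariant hyperbolic splitting $T\RR^n=E^s_{\overline{f}}\oplus E^u_{\overline{f}}$. Since $f$ preserves the lattice $\ZZ^n$, the derivative $D\overline{f}$ and the splitting are $\ZZ^n$-periodic; consequently the hyperbolicity constants, the H\"older exponents and constants of $E^s_{\overline{f}},E^u_{\overline{f}}$ and of $D\overline{f}$, and the uniform sizes of the local invariant manifolds are all inherited from the compact torus and are therefore uniform over all of $\RR^n$. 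This uniformity, already noted before the lemma, is precisely what permits the standard compact-case arguments to be transplanted to the non-compact cover.

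First I would integrate the distributions. Applying the Hadamard--Perron (stable/unstable manifold) theorem to $\overline{f}$, each $y\in\RR^n$ has local stable and unstable manifolds $W^s_{loc}(y)$ and $W^u_{loc}(y)$ that are $C^{1+\alpha}$ embedded discs tangent respectively to $E^s_{\overline{f}}(y)$ and $E^u_{\overline{f}}(y)$, of uniform size by the periodicity. By invariance and uniqueness these local leaves saturate into global leaves $W^u(y)=\bigcup_{m\ge0}\overline{f}^{\,m}\bigl(W^u_{loc}(\overline{f}^{-m}y)\bigr)$, and $W^s(y)$ analogously, yielding foliations $\mathcal{F}^u_{\overline{f}}$ and $\mathcal{F}^s_{\overline{f}}$ with uniformly $C^{1+\alpha}$ leaves and H\"older tangent distributions. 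It is essential to work upstairs here: on $\RR^n$ the bundle $E^u_{\overline{f}}$ is genuinely integrable, whereas downstairs $\mathcal{E}^u_f$ may be multivalued.

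The heart of the argument is transversal absolute continuity, which I would establish exactly as in the Anosov--Sinai theorem. Fix two $C^1$ transversals $\Sigma_1,\Sigma_2$ to $\mathcal{F}^u_{\overline{f}}$, joined by the holonomy $h\colon\Sigma_1\to\Sigma_2$ that slides each point along its unstable leaf; absolute continuity means $h$ and $h^{-1}$ carry Lebesgue-null sets to Lebesgue-null sets, with a positive bounded Jacobian. Writing $J^u(y):=\det\bigl(D\overline{f}|_{E^u_{\overline{f}}(y)}\bigr)$ for the unstable Jacobian, the holonomy Jacobian at $p\in\Sigma_1$ is given by the infinite product $\prod_{n\ge0}J^u(\overline{f}^{-n}p)/J^u(\overline{f}^{-n}q)$, where $q=h(p)$ lies on the same unstable leaf as $p$. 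Its logarithm is dominated by the series $\sum_{n\ge0}\bigl|\log J^u(\overline{f}^{-n}p)-\log J^u(\overline{f}^{-n}q)\bigr|$, whose $n$-th term is $O(\lambda^{-n\alpha})$: backward iteration contracts the unstable leaf, so $\overline{f}^{-n}p$ and $\overline{f}^{-n}q$ lie at distance $O(\lambda^{-n})$ along a common leaf, and $\log J^u$ is $\alpha$-H\"older because $D\overline{f}$ is $C^\alpha$ and $E^u_{\overline{f}}$ is H\"older. Summability of this geometric series gives the bounded-distortion estimate, hence absolute continuity of $h$ with uniformly controlled Jacobian; applying the same computation to $\overline{f}^{-1}$ yields transversal absolute continuity of $\mathcal{F}^s_{\overline{f}}$.

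The main obstacle is the non-compactness of $\RR^n$, which a priori obstructs the uniform distortion bounds that the Anosov--Sinai argument requires: convergence of the Jacobian product and its uniformity in the base point depend on having leaf-wise H\"older constants and hyperbolicity rates bounded independently of the (unbounded) domain. The resolution is the periodicity observation of the first paragraph: because $D\overline{f}$ and all derived quantities descend to the compact torus, every constant entering the distortion series is uniform over $\RR^n$, so the classical proof carries over verbatim and delivers the two transversally absolutely continuous foliations tangent to $E^u_{\overline{f}}$ and $E^s_{\overline{f}}$.
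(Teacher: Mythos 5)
Your overall strategy is exactly the paper's: the paper's entire proof is ``Similar to the compact case, [HPS]'', justified by the preceding remark that periodicity of $D\overline{f}$ makes the compact-case arguments (Hadamard--Perron plus Anosov--Sinai absolute continuity) transplantable to $\mathbb{R}^n$. However, your write-up contains one genuinely false assertion at the point where all the uniformity is supposed to come from: you claim that ``the derivative $D\overline{f}$ \emph{and the splitting} are $\mathbb{Z}^n$-periodic.'' The derivative is periodic, and so is $E^s_{\overline{f}}$ (it depends only on forward orbits, and $\overline{f}(y+n)=\overline{f}(y)+An$ with $A=f_*$, so forward orbits of lattice translates are lattice translates). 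But $E^u_{\overline{f}}$ is \emph{not} $\mathbb{Z}^n$-periodic in general: it depends on the backward orbit, and since $A\mathbb{Z}^n\subsetneq\mathbb{Z}^n$ for a non-invertible endomorphism, the backward orbit of $y+n$ need not be a lattice translate of the backward orbit of $y$. Indeed, if $E^u_{\overline{f}}$ were periodic it would descend to a single well-defined unstable direction at every point of $\mathbb{T}^n$, i.e.\ every Anosov endomorphism would be special --- contradicting Przytycki's examples and rendering Theorem \ref{teo1} of this very paper vacuous. So the sentence from which you derive ``the H\"older exponents and constants of $E^u_{\overline{f}}$ are inherited from the compact torus'' does not stand.

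The gap is repairable, because the uniformity you need is true for a different reason. The uniform hyperbolicity constants of $\overline{f}$ come directly from Definition \ref{defprz} on the compact manifold $M$ (via Proposition \ref{propMP}), and the uniform $C^{1+\alpha}$ bounds on $\overline{f}$ do follow from periodicity of $D\overline{f}$. The standard cone-field/graph-transform proof that invariant bundles of a $C^{1+\alpha}$ uniformly hyperbolic diffeomorphism are H\"older (and that local invariant manifolds have uniform size and uniformly H\"older tangent planes) uses only these two ingredients --- uniform hyperbolicity rates and uniform $C^{1+\alpha}$ norms --- and never compactness or any periodicity of the bundles themselves. With the uniform H\"older continuity of $E^u_{\overline{f}}$ obtained this way, the rest of your argument (the holonomy Jacobian as an infinite product, the $O(\lambda^{-n\alpha})$ distortion series, and the same computation for $\overline{f}^{-1}$ to handle $\mathcal{F}^s_{\overline{f}}$) goes through as you wrote it and matches what the paper's citation of \cite{HPS} is meant to invoke.
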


\begin{proof}
Similar to the compact case, \cite{HPS}.
\end{proof}

\begin{definition} \label{quasi isometric}
A foliation $W$ of $\mathbb{R}^n$  is quasi-isometric if there exist positive constants $Q$ and $b$
such that for all $x, y$ in a common leaf of W we have
$$d_W(x, y) \leq Q^{-1} || x - y|| + b.$$
Here $d_W$ denotes the riemannian metric on $W$ and $\|x-y\|$ is the euclidean distance.
\end{definition}

\begin{remark}\label{remarkquasi} Observe that if $||x - y||$ is large enough, we can  consider $b = 0, $ in the above definition.
\end{remark}

\begin{lemma}\label{quasi_iso_fol}
Let $A$ be as theorem \ref{teo2}. If   $f$ is an Anosov endomorphism  $C^1-$ sufficiently close to $A,$ then $\mathcal{F}^{s,u }_{\overline{f}}$ are quasi isometric foliations.
\end{lemma}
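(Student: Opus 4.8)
The plan is to deduce quasi-isometry from the fact that, after lifting, the bundles $E^{s}_{\overline{f}}$ and $E^{u}_{\overline{f}}$ lie in arbitrarily thin cones around the \emph{fixed} linear subspaces $E^{s}_{A}$ and $E^{u}_{A}$. First I would record the cone estimate: since $D\overline{f}$ is $\ZZ^{n}$-periodic (it descends to the compact torus), $C^{1}$-proximity of $f$ to $A$ makes $D\overline{f}$ uniformly close to $A$ on all of $\RR^{n}$; by the usual robustness of the hyperbolic splitting (Proposition \ref{propMP} together with Theorem \ref{prz}) this forces, for every $x\in\RR^{n}$,
$$\angle\bigl(E^{u}_{\overline{f}}(x),E^{u}_{A}\bigr)<\theta,\qquad \angle\bigl(E^{s}_{\overline{f}}(x),E^{s}_{A}\bigr)<\theta,$$
with $\theta>0$ as small as we wish by shrinking the neighbourhood $\mathcal{U}$. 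Write $e^{u}$ for a unit generator of the line $E^{u}_{A}$ and let $P^{s}\colon\RR^{n}\to E^{s}_{A}$ be the projection along $E^{u}_{A}$; the cone condition makes $P^{s}$ uniformly bounded below on the stable bundle, i.e. there is $c>0$ with $\|P^{s}w\|\geq c\|w\|$ for all $w\in E^{s}_{\overline{f}}$.

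For the unstable foliation, which is one-dimensional because $\dim E^{u}_{A}=1$, the argument is direct. Let $x,y$ lie on a common leaf and let $\gamma\colon[0,L]\to\RR^{n}$ be the unit-speed arc of the leaf joining them, $L=d_{\mathcal{F}^{u}_{\overline{f}}}(x,y)$. Since $\gamma'(t)\in E^{u}_{\overline{f}}(\gamma(t))$ stays in a thin cone about $E^{u}_{A}$ and never becomes orthogonal to it, the scalar $t\mapsto\langle\gamma'(t),e^{u}\rangle$ has constant sign and satisfies $|\langle\gamma'(t),e^{u}\rangle|\geq\cos\theta$. Integrating and using that orthogonal projection is $1$-Lipschitz,
$$\|x-y\|\ \geq\ \bigl|\langle x-y,e^{u}\rangle\bigr|\ =\ \Bigl|\int_{0}^{L}\langle\gamma'(t),e^{u}\rangle\,dt\Bigr|\ \geq\ L\cos\theta,$$
so $d_{\mathcal{F}^{u}_{\overline{f}}}(x,y)\leq(\cos\theta)^{-1}\|x-y\|$, the inequality of Definition \ref{quasi isometric} with $b=0$.

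For the stable foliation, whose leaves have dimension $n-1$, monotonicity of a single scalar is no longer available, and here lies the main obstacle: one must show that each leaf $W$ is a \emph{global} Lipschitz graph over $E^{s}_{A}$ rather than merely a local one. The cone condition already gives that $P^{s}|_{W}$ is a local diffeomorphism onto $E^{s}_{A}$ which expands by at least the factor $c$ above. Since $W$ is a smooth complete immersed submanifold (as a stable manifold of the Anosov diffeomorphism $\overline{f}$), this lower bound lets one lift every path of $E^{s}_{A}$ to $W$ with length multiplied by at most $c^{-1}$, so that no lift escapes in finite time; hence $P^{s}|_{W}$ is a covering map, and as $E^{s}_{A}\cong\RR^{n-1}$ is simply connected it is in fact a diffeomorphism. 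Consequently $W=\{\xi+g(\xi):\xi\in E^{s}_{A}\}$ for a map $g\colon E^{s}_{A}\to E^{u}_{A}$ whose Lipschitz constant is controlled by $\theta$. (Alternatively, the global graph property can be extracted from the conjugacy $h$ with $h\overline{f}=Ah$ of the Remark following Theorem \ref{teo2}, which carries the leaves of $\mathcal{F}^{s}_{\overline{f}}$ homeomorphically onto the affine planes of the linear stable foliation.) Granting this, the parametrisation $\Phi(\xi)=\xi+g(\xi)$ is globally Lipschitz, so for $x=\Phi(\xi)$, $y=\Phi(\eta)$ on $W$ I bound the leaf distance by the length of $\Phi$ applied to the segment $[\xi,\eta]$,
$$d_{\mathcal{F}^{s}_{\overline{f}}}(x,y)\ \leq\ \mathrm{Lip}(\Phi)\,\|\xi-\eta\|\ =\ \mathrm{Lip}(\Phi)\,\bigl\|P^{s}(x-y)\bigr\|\ \leq\ \mathrm{Lip}(\Phi)\,\|P^{s}\|\,\|x-y\|,$$
again an estimate of the required form with $b=0$ (consistent with Remark \ref{remarkquasi}). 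I expect the completeness-plus-covering step for the $(n-1)$-dimensional stable leaves to be the only genuinely non-routine point; everything else is the thin-cone estimate propagated through the compactly supported periodicity of $D\overline{f}$.
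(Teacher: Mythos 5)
Your proof is correct, and its first half coincides exactly with the paper's: both arguments reduce the lemma to the uniform angle bounds $\angle(E^{\sigma}_{\overline{f}}(x),E^{\sigma}_A)<\theta$, $\sigma\in\{s,u\}$, for all $x\in\mathbb{R}^n$, which follow from $C^1$-closeness to $A$ together with the $\mathbb{Z}^n$-periodicity of $D\overline{f}$. The difference is what happens next. The paper stops there: it observes that these bounds are precisely the hypotheses of Proposition \ref{brin} (Brin's criterion: a foliation of $\mathbb{R}^m$ whose tangent planes stay uniformly transverse, with angle bounded below, to a fixed complementary plane $\Delta$ is quasi-isometric), takes $\Delta=E^s_A$ for $\mathcal{F}^u_{\overline{f}}$ and $\Delta=E^u_A$ for $\mathcal{F}^s_{\overline{f}}$, and cites that proposition as a black box. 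You instead prove the criterion in the two cases needed here: for the one-dimensional unstable foliation by the monotone-sign/integration argument, and for the codimension-one stable foliation by showing each leaf is a global Lipschitz graph over $E^s_A$, via the standard covering-map lemma (local diffeomorphism, plus completeness of the leaf, plus a uniform lower bound on the derivative of the projection, implies covering, hence diffeomorphism onto the simply connected plane $E^s_A$). Your stable-leaf argument is in substance Brin's own proof of Proposition \ref{brin}, so nothing is at risk mathematically; what you gain is a self-contained argument with explicit constants and $b=0$ (consistent with Remark \ref{remarkquasi}), and what you pay is having to justify intrinsic completeness of the leaves and the covering property — the two points you correctly flag as the only non-routine ones, and exactly the points that the citation to \cite{Br} lets the paper bypass. (The parenthetical alternative via the conjugacy $h\circ\overline{f}=A\circ h$ is weaker as stated, since $h$ is only a homeomorphism and by itself yields a topological, not Lipschitz, graph structure; your covering argument is the right one to keep.)
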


The proof of this lemma follows directly from a proposition due to Brin, \cite{Br}.

\begin{proposition}\label{brin} Let $W$ be a $k-$dimensional foliation on the $\mathbb{R}^m.$ Suppose that there is a $(m-k)-$dimensional plane $\Delta$ such that $T_x W(x) \cap \Delta  =\{0\},$ such that $\angle (T_x W(x) , \Delta) \geq \beta > 0,$ for every $x \in \mathbb{R}^m.$ Then $W$ is quasi isometric.
\end{proposition}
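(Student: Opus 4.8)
The statement I need to prove is Brin's quasi-isometry criterion: a $k$-dimensional foliation $W$ on $\mathbb{R}^m$ whose tangent spaces make a uniform angle $\geq \beta > 0$ with a fixed complementary $(m-k)$-plane $\Delta$ is quasi-isometric. Let me think carefully about what this requires.

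We have $\mathbb{R}^m = W\text{-directions} \oplus \Delta$ in a uniform sense. The key intuition: since every leaf tangent space is uniformly transverse to $\Delta$, a leaf cannot "fold back" — it must project nicely onto the orthogonal complement of $\Delta$. So I want to control intrinsic leaf distance $d_W(x,y)$ by Euclidean distance $\|x-y\|$.

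Let me set up coordinates. Write $\mathbb{R}^m = V \oplus \Delta$ where $V = \Delta^\perp$ is $k$-dimensional. Let $\pi: \mathbb{R}^m \to V$ be orthogonal projection. The angle condition $\angle(T_xW, \Delta) \geq \beta$ means that $T_xW$ projects injectively onto $V$ with bounded distortion — specifically, for any unit vector $v \in T_xW$, the component $\|\pi(v)\|$ is bounded below by $\sin\beta$ (or some explicit function of $\beta$). This is the crucial geometric fact I should extract first.

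Here is the plan. Given $x, y$ on a common leaf $L$, take a path $\gamma: [0,\ell] \to L$ parametrized by arc length, $\gamma(0)=x$, $\gamma(\ell)=y$, with length $\ell = d_W(x,y)$ (or arbitrarily close to it via a near-minimizing path). Then I consider the projected path $\pi\circ\gamma$ in $V$. The velocity satisfies $\|(\pi\circ\gamma)'(t)\| = \|\pi(\gamma'(t))\| \geq c(\beta)$ for all $t$, where $c(\beta) > 0$ depends only on $\beta$ (this is where the uniform angle enters). So the projected path moves in $V$ at speed bounded below by $c(\beta)$. Wait — that bounds the *speed* from below, which controls how fast $\pi\circ\gamma$ moves, but to get quasi-isometry I need to bound $\ell$ above by $\|x-y\|$. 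Let me reconsider: the total variation argument should go the other way.

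**The correct comparison.** The point is that $\pi\circ\gamma$ is a curve in the $k$-dimensional space $V$ from $\pi(x)$ to $\pi(y)$, and its velocity has norm at least $c(\beta)$ everywhere. But a curve with speed bounded below need not have controlled length. The real content of Brin's argument is subtler: one shows the leaf is a *graph* over $V$. Precisely, because $T_xW$ is uniformly transverse to $\Delta$ at *every* point of $\mathbb{R}^m$ (not just on $L$), the projection $\pi|_L : L \to V$ is a local diffeomorphism that is in fact a covering, hence a global diffeomorphism onto its image (as $V$ is simply connected), so $L$ is a graph $\{(\,v, \varphi(v)\,) : v \in \pi(L)\}$ of a function $\varphi$ whose derivative is bounded by $\cot\beta$ (or $1/\tan\beta$) by the angle condition. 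I would establish this graph property first; it is the structural heart of the proof.

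**Concluding via the graph bound.** Once $L$ is the graph of a Lipschitz map $\varphi$ with $\mathrm{Lip}(\varphi) \leq L_\beta := \cot\beta$, I reparametrize: the straight segment in $V$ from $\pi(x)$ to $\pi(y)$ lifts to a path in $L$ whose length is at most $\sqrt{1 + L_\beta^2}\,\|\pi(x)-\pi(y)\| \leq \sqrt{1+L_\beta^2}\,\|x-y\|$. Since $d_W(x,y)$ is the infimum of lengths of paths in $L$ joining $x$ to $y$, this lifted path gives
$$
d_W(x,y) \leq \sqrt{1 + L_\beta^2}\,\|x - y\|.
$$
Setting $Q^{-1} = \sqrt{1+L_\beta^2}$ and $b = 0$ yields the definition \ref{quasi isometric} of quasi-isometry (indeed with $b=0$, consistent with remark \ref{remarkquasi}).

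**Main obstacle.** The step I expect to be delicate is proving that $\pi|_L$ is a global diffeomorphism onto a convex (or at least path-connected, containing-the-segment) subset of $V$ — i.e., that the leaf is genuinely a graph over all of $V$ and that the straight segment joining $\pi(x)$ to $\pi(y)$ actually lies in $\pi(L)$. The uniform transversality gives the local diffeomorphism and a lower bound on how the image spreads, but upgrading this to a covering map and controlling the image requires a completeness/properness argument: one must verify that $\pi|_L$ has the path-lifting property, which uses that leaves are complete in the intrinsic metric together with the uniform angle bound preventing the lift from escaping in finite projected length. I would handle this by a continuation argument — lifting the straight segment in $V$ step by step and using the uniform speed bound $\|\pi(\gamma')\|\geq c(\beta)$ to guarantee the lift does not blow up in finite time, so it extends over the whole segment.
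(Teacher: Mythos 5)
Your proposal is correct, but one caveat on the comparison: the paper gives no proof of Proposition \ref{brin} at all --- it is quoted from Brin \cite{Br} and used as a black box to prove Lemma \ref{quasi_iso_fol}. What you have reconstructed is essentially Brin's own argument: the orthogonal projection $\pi$ onto $V=\Delta^{\perp}$ satisfies $\|\pi(v)\|\ge \sin\beta\,\|v\|$ for every $v\in T_xW$, so $\pi$ restricted to a leaf $L$ is a local diffeomorphism; intrinsic completeness of leaves together with this uniform lower bound makes $\pi|_L$ a covering of the simply connected plane $V$, hence a diffeomorphism, so $L$ is the graph of a map $V\to\Delta$ with Lipschitz constant $\cot\beta$; lifting the straight segment from $\pi(x)$ to $\pi(y)$ gives $d_W(x,y)\le (1/\sin\beta)\,\|\pi(x)-\pi(y)\|\le (1/\sin\beta)\,\|x-y\|$, i.e.\ quasi-isometry with $Q=\sin\beta$ and $b=0$, consistent with Remark \ref{remarkquasi}. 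The one step you leave as a sketch is the one you correctly single out as the crux --- that the leafwise local diffeomorphism is a covering map --- and your continuation argument is the standard way to close it (it is the classical lemma that a local diffeomorphism from a complete Riemannian manifold whose derivative is bounded below is a covering map): a partial lift of the segment has speed at most $1/\sin\beta$, hence finite length over a finite parameter interval, so completeness lets the maximal lift reach its right endpoint and local invertibility pushes it past, while injectivity of a covering over a simply connected base guarantees that the lift starting at $x$ terminates at $y$ itself rather than at another point of $L\cap\pi^{-1}(\pi(y))$. Two small points to make explicit if you write this up: completeness of leaves in the intrinsic metric needs its own (standard, foliation-box) justification, and the identity $\sqrt{1+\cot^{2}\beta}=1/\sin\beta$ gives the clean constant.
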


\begin{proof}[Proof of the lemma]
 Consider $U $  a $C^1-$open set containing $A,$ such that for every $f \in U,$  $\overline{f}$ and $\overline{A}$ are $C^1$ close in the universal cover $\mathbb{R}^n.$

 The $C^1$ neighborhood $U,$ is taken such that

\begin{equation}\label{angulou}
|\angle (E^u_{\overline{f}}(x), E^u_A ) | < \alpha,
\end{equation}

\begin{equation}\label{angulos}
|\angle (E^s_{\overline{f}}(x), E^s_A ) | < \alpha,
\end{equation}

 for any $x \in \mathbb{R}^n$ where $\alpha$ is an small number less than 1/2$\angle(E^u_A, E^s_A).$
For the foliation $\mathcal{F}^u_{\overline{f}}$ take $\Delta := E^s_A,$ and, for the foliation $\mathcal{F}^s_{\overline{f}},$  $\Delta := E^u_A.$
  Applying the proposition \ref{brin} completes the proof.

 \end{proof}


\begin{corollary}[Nice Properties]\label{nice}
 For any Anosov endomorphism $f: \mathbb{T}^n \rightarrow \mathbb{T}^n$ close to its linearisation $A$, the following properties hold in the universal covering:

\begin{enumerate}

\item For each $k \in \mathbb{N}$ and $C > 1$ there is $M$ such that,
$$||x -  y|| > M \Rightarrow \frac{1}{C} \leq \displaystyle\frac{||\overline{f}^kx - \overline{f}^k y|| }{||A^kx - A^ky||} \leq C.$$

\item  $ \displaystyle\lim_{||y - x || \rightarrow +\infty} \frac{y-x}{||y - x ||} = E_A^{\sigma}, \;\;  y \in \mathcal{F}^{\sigma}_{\bar{f}} (x), \sigma  \in \{s,  u\},$
uniformly.
\end{enumerate}
\end{corollary}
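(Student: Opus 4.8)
My plan is to derive both statements from the quasi-isometry of $\mathcal{F}^{s,u}_{\overline{f}}$ (Lemma \ref{quasi_iso_fol}) together with the angle bounds \eqref{angulou}--\eqref{angulos}, exploiting the fact that $\overline{f}$ and $A$ are conjugate in $\mathbb{R}^n$ (via the remark following Theorem \ref{teo2}) with a conjugacy $h$ at bounded distance from the identity. For item (1), I would first invoke the semiconjugacy/conjugacy $h \circ \overline{f} = A \circ h$ on $\mathbb{R}^n$, where $\|h - \mathrm{id}\|_{\infty} \leq K$ for some constant $K$ since $h$ descends to a homeomorphism of the torus. Then $\overline{f}^k x = h^{-1}(A^k h(x))$, and comparing $\|\overline{f}^k x - \overline{f}^k y\|$ to $\|A^k x - A^k y\|$ reduces to controlling the additive errors introduced by $h$ and $h^{-1}$ against the quantity $\|A^k x - A^k y\|$. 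The point is that as $\|x - y\| \to \infty$ along the relevant directions, $\|A^k x - A^k y\|$ grows without bound (uniformly in the direction, by hyperbolicity of $A$ and the angle control keeping $x-y$ away from pure contraction), so the fixed additive constant $2K$ becomes negligible, squeezing the ratio between $1/C$ and $C$ once $\|x-y\| > M$.

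For item (2), I would argue separately on unstable and stable leaves; take $\sigma = u$, the stable case being symmetric. Fix $x$ and let $y \in \mathcal{F}^u_{\overline{f}}(x)$ with $\|y - x\| \to \infty$. By quasi-isometry (Proposition \ref{brin}, applied as in Lemma \ref{quasi_iso_fol}) the leaf distance $d_{\mathcal{F}^u}(x,y)$ is comparable to the Euclidean distance $\|y-x\|$, so the chord from $x$ to $y$ captures the asymptotic direction of a genuinely long leaf segment. Since the leaf is everywhere tangent to $E^u_{\overline{f}}$, which by \eqref{angulou} lies within a cone of half-angle $\alpha$ about $E^u_A$, the unit chord $\frac{y-x}{\|y-x\|}$ must also lie in this cone; the remaining task is to upgrade ``lies in a cone'' to ``converges to $E^u_A$''. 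Here I would push the leaf forward by $A$ (equivalently, use the conjugacy to transfer to the linear picture): under $A^k$ the $E^u_A$-component expands while any transverse component is governed by the off-diagonal behavior, and combined with item (1) the normalized chord direction must align with $E^u_A$ in the large-scale limit. The uniformity in $x$ comes from the uniform angle bound $\alpha$ and the uniform hyperbolicity constants of $A$.

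The main obstacle I anticipate is establishing the uniformity and the genuine convergence (not merely cone-confinement) in item (2). Confining the chord to a cone of half-angle $\alpha$ is immediate from the tangency condition, but $\alpha$ is a fixed small constant, so this alone does not give convergence to $E^u_A$. The real work is to show the cone shrinks to the single direction $E^u_A$ as $\|y-x\| \to \infty$: this requires exploiting that a long quasi-isometric unstable leaf, when viewed through the linear conjugacy $h$ (which distorts by at most a bounded amount), maps to a set whose large-scale geometry is dictated by $A$'s expanding eigendirection. I would make this precise by writing $y - x = h^{-1}(A^m w) - x$ for suitable $w$ on the $A$-leaf through $h(x)$ and letting the iterate $m \to \infty$, using item (1) to control the normalization; the delicate point is interchanging the two limits $\|y-x\|\to\infty$ and the internal iteration cleanly while keeping all estimates uniform in the basepoint $x$.
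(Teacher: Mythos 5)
Your route through a lifted conjugacy $h$ (with $h\circ\overline{f}=A\circ h$ on $\mathbb{R}^n$) is genuinely different from the paper's, and a version of it can be made to work, but as written it has two real problems. First, your justification that $\|h-\mathrm{id}\|_{\infty}\leq K$ ``since $h$ descends to a homeomorphism of the torus'' is false: if $h$ descended equivariantly it would conjugate $f$ and $A$ on $\mathbb{T}^n$, contradicting exactly the non-structural-stability of Anosov endomorphisms (Przytycki's examples, emphasized in the paper's introduction, exist in every $C^1$-neighbourhood of $A$ and are not conjugate to $A$ on the torus). The boundedness of $h-\mathrm{id}$ is true, but it must be extracted from the construction of $h$ (the shadowing/fixed-point argument is run in the space of maps at bounded distance from the identity), not from descent. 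Note that the paper avoids $h$ entirely for item (1): since $f_{*}=A$, the map $\overline{f}^k-A^k$ is $\mathbb{Z}^n$-periodic, hence bounded in $C^0$, and two triangle inequalities plus non-singularity of $A$ finish the proof. (Your parenthetical worry about ``angle control keeping $x-y$ away from pure contraction'' is also unnecessary: for fixed $k$ one has $\|A^kx-A^ky\|\geq \|A^{-k}\|^{-1}\|x-y\|\rightarrow\infty$ in every direction, because $A$ is invertible as a linear map of $\mathbb{R}^n$.)

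Second, and more seriously, item (2) is not proved. You correctly identify that cone-confinement with the fixed half-angle $\alpha$ is insufficient, but the upgrade you propose --- push forward by $A^k$, invoke item (1), and ``interchange the two limits'' --- is not an argument, and the mechanism as described cannot distinguish directions: forward iteration by $A^k$ aligns \emph{every} vector with nonzero $E^u_A$-component to $E^u_A$, so knowing that $A^k(y-x)$ aligns with $E^u_A$ says nothing about the direction of $y-x$ itself. You flag this ``delicate point'' yourself and leave it open; it is the entire content of item (2). Two honest ways to close it: (i) within your framework, show that $h$ maps leaves of $\mathcal{F}^{\sigma}_{\overline{f}}$ into \emph{affine translates} of $E^{\sigma}_A$ --- for linear hyperbolic $A$, $h(y)-h(x)\in E^s_A$ if and only if $\sup_{n\geq 0}\|A^n(h(y)-h(x))\|<\infty$, and this boundedness follows from $y\in\mathcal{F}^s_{\overline{f}}(x)$ together with $\|h-\mathrm{id}\|_{\infty}\leq K$ (unstable case via $\overline{f}^{-1}$); then $\|(y-x)-(h(y)-h(x))\|\leq 2K$, so the unit chord lies within $2K/\|y-x\|$ of $E^{\sigma}_A$, giving uniform convergence at once. (ii) The paper's route, which uses no conjugacy: for $y\in\mathcal{F}^s_{\overline{f}}(x)$, the leafwise contraction rate $((1+\delta)|\theta^s|)^k$ combined with quasi-isometry (Lemma \ref{quasi_iso_fol}) and item (1) yields $\|A^k(x-y)\|\leq((1+2\delta)|\theta^s|)^k\|x-y\|$ for large $k$, and the hyperbolic splitting of $A$ then forces $\|\pi^u_A(x-y)\|<\varepsilon\|\pi^s_A(x-y)\|$; the unstable case follows by applying this to $\overline{f}^{-1}$ and $A^{-1}$. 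Either repair yields the corollary, but your text as it stands establishes neither.
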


\begin{proof}
The proof is in the lines of \cite{H} and we repeat for completeness. Let $K$ be a fundamental domain of $\mathbb{T}^d$ in $\mathbb{R}^d, d \geq 2.$ Restricted to $K $ we have

$$||\overline{f}^k -  A^k|| < +\infty, $$

For $\overline{x} \in \mathbb{R}^d,$ there are $x \in K$ and $\overrightarrow{n} \in \mathbb{Z}^d,$ such that $\overline{x}  = x + \overrightarrow{n}, $ since $f_{\ast} = A,$ we obtain:

$$ ||\overline{f}^k(\overline{x}) - A^k(\overline{x})|| = ||\overline{f}^k(x + \overrightarrow{n}) - A^k(x +\overrightarrow{ n})|| = ||\overline{f}^k(x) +A^k \overrightarrow{n} - A^kx - A^k\overrightarrow{n}  || < +\infty. $$

Now, for every $x, y \in \mathbb{R}^d,$

$$||\overline{f}^k x - \overline{f}^ky|| \leq ||A^k x - A^k y|| + 2||\overline{f}^k - A^k||_0$$

$$||A^k x - A^ky|| \leq ||\overline{f}^k x - \overline{f}^k y|| + 2||\overline{f}^k - A^k||_0,$$

where,

 $$||\overline{f}^k - A^k||_0 = \max_{x \in K}\{||\overline{f}^k(x) - A^k(x)||\}.$$

Since $A$ is non-singular,  if $||x -  y|| \rightarrow + \infty,$ then $||A^kx - A^k y|| \rightarrow + \infty.$

So dividing both expressions by $||A^kx - A^k y|| $ and doing $||x -  y|| \rightarrow + \infty$ we obtain the proof of the first item.

For the second item, we just consider the case of $E^s_A,$ for $E^ u$ just take $A^{-1}$ and $(\overline{f})^{-1}$  and same proof holds.

Let $|\theta^s| = \max\{\,|\theta| \;| \; \theta\; \mbox{is eigenvalue of $A$ and}\; 0 < |\theta| < 1 \}.$  Fix a small $\varepsilon > 0$ and consider $\delta > 0,$ such that $0 < (1+ 2\delta)|\theta^s| < 1.$ If $f$ is sufficiently $C^1-$close to $A,$ then $\overline{f}$  is an Anosov diffeomorphism  on $\mathbb{R}^d$ with contracting constant less than $(1+ \delta)|\theta^s|.$

Using the hyperbolic splitting, there is $k_0 \in \mathbb{N},$ such that if $v \in \mathbb{R}^d,$  $k > k_0$ and

$$||A^k v || < (1 + 2\delta)^k |\theta^s|^k ||v||,$$

then

$$||\pi^u_A(v)|| < \varepsilon ||\pi^s_A(v)||.$$

Consider $k > k_0$ and $M$ sufficiently large, satisfying  the first item  with $C = 2$ and in accordance with remark \ref{remarkquasi}.

Take $y \in \mathcal{F}^s_{\overline{f}}(x)$ and $||x - y|| > M.$ Let $d^s$ to denote the riemannian distance on stable leaves of $\mathcal{F}^s_{\overline{f}},$ using quasi isometry property of the foliation $\mathcal{F}^s_{\overline{f}},$ we get

$$d^s(\overline{f}^k x, \overline{f}^k y) <  ((1 + \delta)|\theta^s|)^k d^s(x,y) \Rightarrow $$
$$ ||\overline{f}^k x - \overline{f}^k y|| <  ((1 + \delta)|\theta^s|)^k (Q^ {-1}|| x - y||) \Rightarrow $$
$$ ||A^k x - A^k y|| < 2 ((1 + \delta)|\theta^s|)^k (Q^ {-1}|| x - y||).$$

Finally,  for large $k$ we have:

$$2Q^{-1} ((1 + \delta)|\theta^s|)^k  \leq ((1 + 2\delta) |\theta^s|)^k,$$

So,

$$||\pi^u_A(x - y)|| < \varepsilon ||\pi^s_A(x- y)||.$$

\end{proof}

\begin{lemma} \cite{MT} \label{linalg}
 Let $f : \mathbb{T}^d \rightarrow \mathbb{T}^d$ be an Anosov endomorphism close to  $A: \mathbb{T}^d \rightarrow \mathbb{T}^d,$ such that $dim E^u_A = 1.$ Then   for all $n \in \mathbb{N}$ and $\varepsilon > 0$ there exists $M$ such that for $x, y$ with $y \in \mathcal{F}^u_{\overline{f}}(x)$ and $||x -  y||> M$ then
 $$
   (1 - \varepsilon)e^{n\lambda^{u}_A } ||y -x|| \leq \|A^n(x) - A^n(y)\| \leq (1 + \varepsilon)e^{n\lambda^{u}_A } ||y -x||
 $$
where $\lambda^{u}$ is the Lyapunov exponent of $A$ corresponding to $E^{u}_A.$
 \end{lemma}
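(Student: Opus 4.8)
The plan is to exploit the linearity of $A$ together with the uniform convergence of normalized secant directions to the unstable eigendirection established in Corollary \ref{nice}(2). Since the lift of $A$ is an honest linear map on the universal cover $\mathbb{R}^d$, for any $x,y \in \mathbb{R}^d$ one has $A^n(x)-A^n(y)=A^n(x-y)$, so that
$$\|A^n(x)-A^n(y)\| = \|x-y\|\,\Big\|A^n\tfrac{x-y}{\|x-y\|}\Big\|.$$
Thus the whole estimate reduces to controlling $\|A^n w\|$ for the unit vector $w=\tfrac{x-y}{\|x-y\|}$, and the decisive point is that when $y\in\mathcal{F}^u_{\overline{f}}(x)$ and $\|x-y\|$ is large, $w$ is forced to lie near the unstable line $E^u_A$.

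First I would fix a unit vector $e$ spanning the one-dimensional eigenspace $E^u_A$. Because $\dim E^u_A=1$, $e$ is a genuine eigenvector of $A$ whose eigenvalue has modulus $e^{\lambda^u_A}$, and hence $\|A^n e\| = e^{n\lambda^u_A}$ exactly. Next, for fixed $n$, the map $w\mapsto \|A^n w\|$ on the unit sphere is continuous, so there is an open neighborhood $\mathcal{V}$ of $\{\pm e\}$ on the sphere such that
$$(1-\varepsilon)e^{n\lambda^u_A}\leq \|A^n w\|\leq (1+\varepsilon)e^{n\lambda^u_A}\qquad\text{for all } w\in\mathcal{V}.$$
The two signs are harmless since $\|A^n(-w)\|=\|A^n w\|$. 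Finally, by Corollary \ref{nice}(2) the direction $\tfrac{x-y}{\|x-y\|}$ converges to $E^u_A$ uniformly over all $x$ and all $y\in\mathcal{F}^u_{\overline{f}}(x)$; hence there is $M$ so that $\|x-y\|>M$ implies $\tfrac{x-y}{\|x-y\|}\in\mathcal{V}$. Combining the last display with the factorization above and multiplying through by $\|x-y\|$ yields the claimed two-sided bound.

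The only genuinely nontrivial ingredient is the uniform convergence of secant directions to $E^u_A$, which is precisely Corollary \ref{nice}(2) and is already in hand; everything else is elementary linear algebra and continuity. I would emphasize where the hypothesis $\dim E^u_A=1$ is used: it guarantees that $A$ acts on the limiting direction as multiplication by a single scalar of modulus $e^{\lambda^u_A}$, so that the expansion factor is pinned down by the direction alone. If $E^u_A$ had dimension greater than one, knowing only the limiting secant direction would not determine the stretching factor of $A^n$, and the clean constant $e^{n\lambda^u_A}$ would fail.
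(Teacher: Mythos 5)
Your proposal is correct and follows essentially the same route as the paper's proof: both reduce, via linearity of $A$, to estimating $\|A^n w\|$ for the unit secant vector $w = \frac{x-y}{\|x-y\|}$, invoke Corollary \ref{nice}(2) to force $w$ near $E^u_A$ for $\|x-y\| > M$, and use that $n$ is fixed to absorb the error. The paper writes the error explicitly as $w = v + e_M$ and bounds $|\mu|^N\|e_M\| + \|A\|^N\|e_M\| \leq \varepsilon|\mu|^N$, which is exactly what your continuity-on-the-sphere argument packages more abstractly.
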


\begin{proof} Denote by $E^{u}_A$ the eigenspace corresponding to $\lambda^{u}_A$ and $|\mu| = e^{\lambda^{u}_A},$ where $\mu$ is the eigenvalue of $A$ in the $E^{u}_A$ direction.

Let  $N \in \mathbb{N}$ and choose $x, y \in \mathcal{F}^{u}_f(x),$ such that $|| x - y || > M.$ By corollary \ref{nice},we have

$$ \frac{x - y}{|| x - y||} = v + e_M,$$

where the vector $v = v_{E^{u}_A}$ is a unitary eigenvector of  $A,$ in the  $E^{u}_A$ direction and  $e_M$ is a correction vector that converges to zero uniformly as  $M$ goes to  infinity. We have

$$A^N \left( \frac{x - y}{|| x - y||} \right) = \mu^N v + A^N e_M = \mu^N \left(\frac{x - y}{|| x - y||} \right) -\mu^N e_M  + A^N e_M  $$

It implies that

\begin{align*} || x - y || (|\mu|^N - |\mu|^N ||e_M|| - ||A||^N || e_M||) \leq   || A^N (x - y)|| \\ \leq || x - y || (|\mu|^N + |\mu|^N ||e_M|| + ||A||^N || e_M||).
\end{align*}

Since $N$ is fixed, we can choose  $M > 0,$ such that

$$ |\mu|^N ||e_M|| + ||A||^N || e_M|| \leq \varepsilon |\mu|^N.$$

and the lemma is proved. \end{proof}

\begin{remark}
By Multiplicative Ergodic Theorem for endomorphisms (\cite{QXZ}) the unstable Lyapunov exponent for a typical point is independent of unstable direction. We denote by $\lambda^u(x) = \lambda^u(\tilde{x})$ the unique unstable Lyapunov exponent of $x$ in our context where $\dim (E^u) =1.$
\end{remark}




\begin{theorem}[Theorem \ref{teo2}] Let $A: \mathbb{T}^n \rightarrow \mathbb{T}^n, n \geq 2$ be a conservative linear Anosov endomorphism, with $dim E^u_A = 1.$ Then there is a $C^1$ open set $\mathcal{U},$ containing $A,$ such that for every $f \in \mathcal{U}$ a $C^{1+\alpha}, \alpha > 0$  conservative Anosov endomorphism we have $\lambda^u_f(x) \leq \lambda^u_A,$ for $m$ almost everywhere $x \in \mathbb{T}^n,$ where $m$ is the Lebesgue measure of $\mathbb{T}^n.$
\end{theorem}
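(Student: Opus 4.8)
The plan is to reduce the pointwise comparison to a comparison of exponential growth rates of lengths of large unstable arcs in the universal cover, and to squeeze that growth rate between $\lambda^u_f$ from below and $\lambda^u_A$ from above. First, since $f$ is $C^{1+\alpha}$ and conservative, the Appendix gives ergodicity of $m$, and together with the Multiplicative Ergodic Theorem for endomorphisms (the Remark, \cite{QXZ}) this makes $\lambda^u_f(x)$ equal $m$-a.e. to the constant $\lambda^u_f=\int_{\mathbb{T}^n}\psi\,dm$, where $\psi=\log\|D\overline{f}|_{E^u}\|$ is the $\mathbb{Z}^n$-periodic function descending to $\mathbb{T}^n$. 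Hence it suffices to prove $\lambda^u_f\le\lambda^u_A$, and all remaining work is done in the lift, where $\dim E^u_{\overline{f}}=1$. Writing $g_k=\|D\overline{f}^k|_{E^u}\|=\prod_{j=0}^{k-1}e^{\psi\circ\overline{f}^{\,j}}$, for $y\in\mathcal{F}^u_{\overline{f}}(x)$ on the same one-dimensional unstable leaf the length of the image arc is $d^u(\overline{f}^k x,\overline{f}^k y)=\int_{[x,y]}g_k\,d\ell$, where $[x,y]$ is the unstable arc and $\ell$ its arclength; one-dimensionality is what lets us write leaf length as this scalar integral.

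For the lower bound I would fix $x$ projecting to a point typical for the Birkhoff theorem and lying on a leaf generic for the absolutely continuous foliation $\mathcal{F}^u_{\overline{f}}$ of Lemma \ref{abscont}, take $y\in\mathcal{F}^u_{\overline{f}}(x)$ with $L:=\|x-y\|$ large, and apply Jensen's inequality to the concave function $\log$:
\[
\frac1k\log d^u(\overline{f}^k x,\overline{f}^k y)\ \ge\ \frac1k\log L+\frac{1}{k\,L}\int_{[x,y]}\sum_{j=0}^{k-1}\psi\bigl(\overline{f}^{\,j}z\bigr)\,d\ell(z).
\]
By ergodicity the Birkhoff averages $\tfrac1k\sum_{j<k}\psi\circ\overline{f}^{\,j}$ converge $m$-a.e. to $\lambda^u_f$, and transverse absolute continuity of $\mathcal{F}^u_{\overline{f}}$ transfers this to leafwise-a.e. convergence on the generic leaf; since $\psi$ is bounded, dominated convergence makes the right-hand side tend to $\lambda^u_f$, so $\liminf_k\frac1k\log d^u(\overline{f}^k x,\overline{f}^k y)\ge\lambda^u_f$. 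The point of Jensen here is that it delivers the lower bound without any control on the \emph{forward} distortion of $\overline{f}^k$ along the leaf, which is genuinely unbounded because the unstable direction expands.

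For the upper bound I would use that $\overline{f}^k y$ lies on the unstable leaf of $\overline{f}^k x$, so by quasi-isometry (Lemma \ref{quasi_iso_fol}) and Remark \ref{remarkquasi},
\[
d^u(\overline{f}^k x,\overline{f}^k y)\ \le\ Q^{-1}\|\overline{f}^k x-\overline{f}^k y\|+b\ \le\ Q^{-1}\bigl(\|A^k x-A^k y\|+2\|\overline{f}^k-A^k\|_0\bigr)+b,
\]
using the elementary comparison established in the proof of Corollary \ref{nice}. Because $\dim E^u_A=1$, the spectral radius of $A$ equals $e^{\lambda^u_A}$, so $\|A^k x-A^k y\|\le (e^{k\lambda^u_A}+o(1))L$ and $\|\overline{f}^k-A^k\|_0$ grows no faster than $e^{k\lambda^u_A}$ up to a polynomial factor; equivalently one invokes Lemma \ref{linalg} directly, noting that on unstable leaves its constant $M$ may be chosen independent of $k$. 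This yields $d^u(\overline{f}^k x,\overline{f}^k y)\le C_1\,e^{k\lambda^u_A}(L+1)$ with $C_1$ independent of $k$, hence $\limsup_k\frac1k\log d^u(\overline{f}^k x,\overline{f}^k y)\le\lambda^u_A$.

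Combining the two estimates for a single admissible pair $x,y$ gives $\lambda^u_f\le\lambda^u_A$, and the first paragraph then makes the inequality valid $m$-a.e. The main obstacle, and the place where $C^{1+\alpha}$ regularity, conservativity and $\dim E^u=1$ are all needed, is the passage between the infinitesimal exponent and the large-scale quasi-isometric comparison with $A$: the naive identification ``leaf growth $=$ exponent'' fails because forward distortion is unbounded, and it must be replaced by the one-sided Jensen–plus–ergodic-theorem estimate, while conservativity and absolute continuity are what let the leafwise average see the global constant $\lambda^u_f$.
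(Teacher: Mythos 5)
Your overall architecture --- squeezing the exponential growth rate of a long unstable arc in the universal cover between $\lambda^u_f$ from below (Jensen plus the ergodic theorem) and $\lambda^u_A$ from above (quasi-isometry plus comparison with $A$) --- is sound and genuinely different from the paper's argument, which is by contradiction: the paper takes a positive measure set $Z$ where the exponent exceeds $(1+5\varepsilon)\lambda^u_A$, passes to Egorov-type sets $A_n$ on which the finite-time derivatives obey a uniform lower bound, and then derives incompatible upper and lower estimates for $\|\overline{f}^n x - \overline{f}^n y\|$ along a leaf segment meeting $Z$ in positive leaf measure; notably the paper never needs ergodicity, while your reduction to a single constant $\lambda^u_f$ depends on it. Your upper bound is fine (indeed simpler than the paper's inductive scheme: the bound $\|\overline{f}^k - A^k\|_0 \le C e^{k\lambda^u_A}$ together with the fact that the spectral radius of $A$ is $e^{\lambda^u_A}$ suffices). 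The problem is the lower bound.

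The genuine gap is your assertion that $\psi = \log\|D\overline{f}|_{E^u_{\overline{f}}}\|$ is ``the $\mathbb{Z}^n$-periodic function descending to $\mathbb{T}^n$.'' This is false unless $f$ is special: the direction $E^u_{\overline{f}}(z)$ depends on the entire backward $\overline{f}$-orbit of $z$, and two lifts $z$ and $z+k$, $k \in \mathbb{Z}^n$, of the same torus point in general carry different, non-parallel unstable directions --- this is exactly Przytycki's phenomenon \cite{PRZ} and the content of Theorem \ref{teo1}. Consequently $\psi$ is not a function on $\mathbb{T}^n$, the identity $\lambda^u_f = \int_{\mathbb{T}^n}\psi\, dm$ is ill-defined, and Birkhoff's theorem on $(\mathbb{T}^n, f, m)$ cannot be applied to $\psi$; so the claim that $\frac1k\sum_{j<k}\psi\circ\overline{f}^{\,j} \to \lambda^u_f$ $m$-a.e., which is the step your entire lower bound rests on, is unjustified as written. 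The statement you actually need is: for Lebesgue-a.e.\ $z$ on the chosen lifted leaf, $\frac1k\log\|D\overline{f}^k|E^u_{\overline{f}}(z)\| \to \lambda^u_f$. To get it, the ergodic theorem must be applied on the inverse limit $(M^f, \tilde{m})$ (ergodicity of $m$ gives ergodicity of $\tilde{m}$ by the Proposition in the appendix, and the Multiplicative Ergodic Theorem of \cite{QXZ} gives convergence for $\tilde{m}$-a.e.\ $\tilde{x}$), and then transferred to the particular pre-histories realized by points of $\mathbb{R}^n$ --- these form only a countable subset of each fiber $\pi^{-1}(x)$, so $\tilde{m}$-a.e.\ statements do not automatically reach them. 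The transfer does work, because all unstable directions over a given point share the same forward exponent: writing a unit vector $u_2$ of one unstable direction as $u_2 = a u_1 + v_s$ with $u_1$ in another unstable direction, $v_s \in E^s(x)$ (the stable direction is pre-history independent) and $a \neq 0$, one gets $\|Df^n u_2\| = |a|\,\|Df^n u_1\|(1+o(1))$; this is exactly the computation in the proof of Lemma \ref{angle}. With that supplement (which is essentially what the paper's Remark encodes), your Jensen/dominated-convergence step goes through on a leaf chosen generically for the absolutely continuous foliation of Lemma \ref{abscont}; without it, the proposal does not prove the lower bound.
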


\begin{proof}

Suppose by contradiction that there is a  positive set $Z \in \mathbb{T}^n,$ such that, for every $x \in Z$ we have  $\lambda^u_{\overline{f}}(x) > (1 + 5 \varepsilon) \lambda^u_A,$ for a small $\varepsilon > 0.$
Since $\overline{f}$ is  $C^{1+\alpha},$ the unstable foliation $\mathcal{F}_{\overline{f}}^u$ is upper absolutely continuous. So, there is a positive Lebesgue measure set $B \in \mathbb{R}^n$ such that for every point $x \in B$ we have

\begin{equation}
 m^u_x(\mathcal{F}^u_{\overline{f}}(x) \cap Z) > 0
\label{1}
\end{equation}
where $m^u_x$ is the Lebesgue measure of the leaf $\mathcal{F}^u_{\overline{f}}(x)$.
Choose a $p \in B$ satisfying (\ref{1}) and consider an interval $[x,y]_u \subset \mathcal{F}^u_{\overline{f}}(p) $ satisfying
 $m^u_p([x,y]_u \cap Z) > 0$ such that the length of $[x,y]_u$ is bigger than $M$ as required in the lemma \ref{linalg} and corollary \ref{nice}. We can choose $M$ such that

$$||Ax - Ay|| < (1 + \varepsilon)e^{\lambda^u_A } ||y -x|| $$

and

$$\frac{|| \overline{f}(x) - \overline{f}(y)|| }{ ||Ax - Ay||} < 1 + \varepsilon. $$

 whenever
 $d^u(x, y) \geq M,$ where $d^u$ denotes the riemannian distance in unstable leaves. The above equation implies that $$ || \overline{f}(x) -\overline{f}(y)|| < (1+ \varepsilon)^2 e^{\lambda^u_A} || y - x||.$$

Inductively, we assume that for  $n \geq 1$ we have

\begin{equation}
|| \overline{f}^n(x) - \overline{f}^n(y)||< (1+\varepsilon)^{2n} e^{n \lambda^u_A }|| y - x||. \label{induction}
\end{equation}

 Since $f$ expands uniformly  on the $u-$direction we have $d^u(\overline{f}^n(x), \overline{f}^n(y)) > M,$ consequently

\begin{eqnarray*}
||\overline{ f}(\overline{f}^nx) - \overline{f}(\overline{f}^ny)|| &<& (1+\varepsilon)|| A(\overline{f}^nx) - A(\overline{f}^ny)|| \\  &<& (1 + \varepsilon)^2 e^{\lambda^u_A} || \overline{f}^nx - \overline{f}^n y||\\ &<&
 (1+\varepsilon)^{2(n+1)} e^{(n+1)\lambda^u_A}.
\end{eqnarray*}

For each $n > 0,$ let $A_n \subset Z$ be the following set

$$A_n = \{ x \in Z \colon\;\; \|D\overline{f}^k(x)|E^u_{\overline{f}}(x) \| > (1+2\varepsilon)^{2k} e^{k\lambda^u_A} \;\; \mbox{for any} \;\; k \geq n\}. $$
We have $m(Z) > 0$ and $Z_n := (A_n \cap Z) \uparrow Z,$ as $(1 + 5 \varepsilon) > (1 + 2\varepsilon)^2,$ for small $\varepsilon > 0.$

Define the number $\alpha_0 > 0$ such that:

$$\displaystyle\frac{m^u_p([x,y]_u \cap Z)}{m^u_p([x,y]_u)} = 2 \alpha_0.$$

Since $Z_n \cap [x,y]_u \uparrow Z \cap [x,y]_u, $ there is $n_0 \in \mathbb{N} ,$ such that $n \geq n_0,$ then

 $$ m^u_p ([x,y]_u \cap Z_n) = \alpha_n \cdot m^u_p([x,y]_u),$$

for $\alpha_n > \alpha_0.$

Thus, for $n \geq n_0$ we have:

\begin{eqnarray}
||\overline{f}^nx - \overline{f}^ny || &>&  Q \displaystyle\int_{[x,y]_u \cap Z_n} ||Df^n(z)|| dm^u_p(z) >  \\ &>&
 Q (1+ 2\varepsilon)^{2n}
e^{n \lambda_A^u } m^u_p ([x,y]_u \cap Z_n) \\ &>& \alpha_0 Q^2 (1 + 2\varepsilon)^{2n} e^{n\lambda^u_A} \|x-y\|. \label{conclusion}
\end{eqnarray}
The inequalities $(\ref{induction})$ and $(\ref{conclusion})$ give a contradiction.

\end{proof}

\section{Appendix: Ergodicity Of Anosov Endomorphisms }

In this section we establish the ergodicity of $C^{1+\alpha}$ conservative Anosov Endomorphism. Before this, we remember a classical  result, see \cite{QXZ}.

\begin{proposition} Let $T:  X \rightarrow X $ a continuous map of a compact metric space. For any $T-$invariant Borel probability
measure $\mu$ on $X,$ there exists a unique $\tilde{T}-$ invariant Borel probability measure
$\tilde{\mu}$ on $X^T$ such that $\pi_{\ast}\tilde{\mu} = \mu.$ In particular for a measurable set $A \subset X,$ we have $\mu(A) = \tilde{\mu}(\pi^{-1}(A)). $ Moreover $\mu$ is ergodic if, and only if, $\tilde{\mu}$ is ergodic. Here $\pi: X^T \rightarrow X$ is the projection in the zero coordinate.
\end{proposition}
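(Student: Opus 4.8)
The plan is to build $\tilde\mu$ from its finite-dimensional distributions and then treat the two assertions—existence/uniqueness, and the ergodicity equivalence—separately, the latter being the substantive part.

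\emph{Construction and uniqueness.} For each $n\ge 0$ I would let $\nu_n$ be the Borel probability measure on $X^{\{-n,\dots,0\}}$ obtained as the push-forward of $\mu$ under the orbit map $z\mapsto (z,Tz,\dots,T^{n}z)$, so that coordinate $-n+j$ carries $T^{j}z$. Because $T_\ast\mu=\mu$, dropping the coordinate $-(n+1)$ from $\nu_{n+1}$ returns $\nu_n$, so the family $\{\nu_n\}$ is consistent; since $X$ is a compact metric (hence Polish) space, the Kolmogorov extension theorem yields a unique Borel measure $\tilde\mu$ on the pre-history space whose restriction to the coordinates $\{-n,\dots,0\}$ equals $\nu_n$. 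Each $\nu_n$ sits on the graph $\{(z,Tz,\dots,T^n z)\}$, so $\tilde\mu$ is carried by the set where $T(x_{i})=x_{i+1}$ for all $i$, i.e.\ by $\widetilde X=X^T$. The $0$-th marginal of every $\nu_n$ is $\mu$, whence $\pi_\ast\tilde\mu=\mu$ and in particular $\tilde\mu(\pi^{-1}(A))=\mu(A)$. Invariance $\tilde T_\ast\tilde\mu=\tilde\mu$ I would check on the generating algebra of cylinders, where it reduces exactly to $T_\ast\mu=\mu$. For uniqueness, if $\tilde\nu$ is any $\tilde T$-invariant measure with $\pi_\ast\tilde\nu=\mu$, then $(\pi_{n})_\ast\tilde\nu=(\pi\circ\tilde T^{\,n})_\ast\tilde\nu=\pi_\ast\tilde\nu=\mu$ for every $n$; since $\tilde\nu(X^T)=1$ forces the joint law of $(x_{-n},\dots,x_0)$ onto the graph above with the prescribed marginals, it must equal $\nu_n$, and agreement on all cylinders gives $\tilde\nu=\tilde\mu$.

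\emph{Ergodicity, easy direction.} From $\pi\circ\tilde T=T\circ\pi$ one gets $\tilde T^{-1}(\pi^{-1}A)=\pi^{-1}(T^{-1}A)$, so a $T$-invariant $A$ gives a $\tilde T$-invariant $\pi^{-1}A$; ergodicity of $\tilde\mu$ then yields $\mu(A)=\tilde\mu(\pi^{-1}A)\in\{0,1\}$, i.e.\ $\mu$ is ergodic.

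\emph{Ergodicity, hard direction.} Assuming $\mu$ ergodic, let $\tilde A$ be $\tilde T$-invariant and set $\varphi=\mathbf 1_{\tilde A}$. Since $x_{-n}$ determines every later coordinate, the $\sigma$-algebras $\mathcal F_n:=\pi_{-n}^{-1}(\mathcal B_X)$ are \emph{increasing} and generate the Borel $\sigma$-algebra of $X^T$. Put $g_n:=E_{\tilde\mu}[\varphi\mid\mathcal F_n]=h_n\circ\pi_{-n}$ with $h_n:X\to[0,1]$. The Koopman operator $U_{\tilde T}$ is unitary and maps $V_n:=L^2(\mathcal F_n)=\{h\circ\pi_{-n}\}$ isometrically onto $V_{n-1}$, so conjugation shifts the orthogonal projections $P_n$ onto $V_n$, namely $U_{\tilde T}P_nU_{\tilde T}^{-1}=P_{n-1}$. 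Using $U_{\tilde T}\varphi=\varphi$ this gives $g_{n-1}=P_{n-1}\varphi=U_{\tilde T}P_n\varphi=g_n\circ\tilde T$, and comparing coordinates forces $h_{n-1}=h_n$ $\mu$-a.e.; hence all $h_n$ equal one $h\in L^2(\mu)$. Then $\|g_n\|_{L^2(\tilde\mu)}^2=\int h^2\,d\mu$ is independent of $n$, while martingale convergence gives $g_n\to\varphi$ in $L^2$; since $g_0=P_0\varphi$ and $\|g_0\|=\lim_n\|g_n\|=\|\varphi\|$, equality of norms for an orthogonal projection forces $\varphi=g_0=h\circ\pi$. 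Finally $\tilde T$-invariance of $h\circ\pi$ with $\pi\circ\tilde T=T\circ\pi$ yields $h\circ T=h$ $\mu$-a.e., so ergodicity of $\mu$ makes $h$ constant and $\tilde\mu(\tilde A)=\int h\,d\mu\in\{0,1\}$. The crux—and the step I expect to be the main obstacle—is precisely this descent of a $\tilde T$-invariant function to a $T$-invariant function on the base: the identification $\varphi=h\circ\pi$ hinges on the dynamically coherent increasing filtration by past coordinates, the forced $n$-independence of the profiles $h_n$, and martingale convergence upgrading $L^2$-proximity to exact equality.
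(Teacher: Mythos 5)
Your proof is correct. There is, however, nothing in the paper to compare it against: the proposition is stated as a classical result and attributed to \cite{QXZ}, with no proof given in the paper itself, so your argument has to be measured against the standard treatment of natural extensions rather than an in-paper one.

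Against that standard, your existence/uniqueness part is essentially the classical Rokhlin--Kolmogorov construction: consistent graph-supported laws on past coordinates, extension to the pre-history space, and uniqueness because $\tilde T$-invariance forces every $(x_{-n},\dots,x_0)$-marginal of a lift to be a graph measure with prescribed $(-n)$-th marginal $\mu$. Where you genuinely diverge is the hard direction of the ergodicity equivalence. The usual route observes that functions of the form $h\circ\pi_{-n}$ are dense in $L^2(\tilde\mu)$ (by the same martingale convergence you invoke) and then transfers the mean-ergodic-theorem characterization of ergodicity --- Ces\`aro decay of correlations, checked on this dense set --- from $(X,T,\mu)$ to $(X^T,\tilde T,\tilde\mu)$. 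Your variant instead exploits the conjugation identity $U_{\tilde T}P_nU_{\tilde T}^{-1}=P_{n-1}$ for the projections onto the shifted filtration, the resulting $n$-independence of the profiles $h_n$, and a norm-rigidity step (equality $\|P_0\varphi\|=\|\varphi\|$ for an orthogonal projection forces $\varphi=P_0\varphi$) to conclude that an invariant $\varphi$ is exactly of the form $h\circ\pi$ with $h\circ T=h$. Both proofs rest on the same structural fact --- the past-coordinate $\sigma$-algebras form an increasing, generating filtration that $\tilde T$ shifts --- but yours buys a pointwise identification of invariant functions with functions on the base, rather than only an averaging statement, at the cost of the Koopman/projection formalism. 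Two details are worth spelling out: (i) you build $\tilde\mu$ on $\prod_{i\le 0}X$, so you should record the canonical Borel identification of $\{(x_i)_{i\le 0}:\,T(x_i)=x_{i+1}\}$ with $X^T$ (extend forward by $x_j=T^jx_0$ for $j>0$), under which the shift corresponds to $\tilde T$; (ii) if invariance of $\tilde A$ is taken mod $\tilde\mu$-null sets, then $U_{\tilde T}\varphi=\varphi$ holds only almost everywhere, which is all your $L^2$ argument needs, but it should be said.
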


To prove the ergodicity, we use the theory of SRB-measures of endomorphisms. We suggest the reader to see \cite{QZ}, for further details on SRB theory of  endomorphisms. See also \cite{BT} for the number of ergodic SRB measures for surface endomorphisms in terms of homoclinic equivalence classes.

\begin{theorem}
 Let $f: M \rightarrow M$ be a  $C^{1+\alpha}-$ conservative Anosov Endomorphism. Then $(f,m)$ is ergodic, where $m$ is a volume form on $M.$
\end{theorem}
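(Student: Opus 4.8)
The plan is to run the Hopf argument, but since the unstable direction at a point $x\in M$ is not unique, the right place to carry it out is the inverse limit. By the proposition preceding the statement, $(f,m)$ is ergodic if and only if the natural extension $(\tilde f,\tilde m)$ on $\widetilde M=M^{f}$ is ergodic, so I would first reduce to proving ergodicity of $\tilde f$. The advantage of $\widetilde M$ is that $\tilde f$ is a homeomorphism for which both a stable and an unstable manifold are genuinely defined through each $\tilde x$: the stable manifold $W^{s}(\tilde x)$ projects to the (pre-history independent) stable manifold of $x_{0}$ in $M$, while the unstable manifold $W^{u}(\tilde x)$ is the immersed submanifold tangent to $E^{u}(\tilde x)$, now determined because the whole pre-history is encoded in $\tilde x$. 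On small pieces, $\widetilde M$ carries a local product structure $W^{s}_{\mathrm{loc}}\times W^{u}_{\mathrm{loc}}$.

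Next I would record absolute continuity of the two laminations. The stable lamination descends to $M$, and its transverse absolute continuity is the classical $C^{1+\alpha}$ statement. For the unstable lamination I would pass to the universal cover, where by Proposition \ref{propMP} the lift $\overline{f}$ is a genuine Anosov diffeomorphism and, by Lemma \ref{abscont}, $\mathcal F^{u}_{\overline{f}}$ is transversally absolutely continuous; projecting by $p$ and re-assembling pre-histories transports this to absolute continuity of the unstable holonomies of $\tilde f$ on $\widetilde M$. Equivalently, and this is the formulation that the SRB references make available, the smooth invariant measure $m$ satisfies the Pesin entropy formula, so $\tilde m$ is an SRB measure and its conditional measures along unstable manifolds are equivalent to the leafwise Riemannian volume.

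With these ingredients the Hopf argument proceeds as usual. For a continuous $\varphi:M\to\mathbb R$ set $\tilde\varphi=\varphi\circ\pi$, and let $\tilde\varphi^{+},\tilde\varphi^{-}$ be its forward and backward Birkhoff averages for $\tilde f$; they exist and agree $\tilde m$-a.e. by Birkhoff's theorem, $\tilde\varphi^{+}$ is constant on stable manifolds, and $\tilde\varphi^{-}$ is constant on unstable manifolds. Using absolute continuity of the unstable conditionals together with transverse absolute continuity of the stable holonomy, a Fubini argument over the local product $W^{s}_{\mathrm{loc}}\times W^{u}_{\mathrm{loc}}$ forces the common average to be a.e. constant on each product box. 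Since such boxes cover $\widetilde M$ and $M$ is connected with $m$ of full support, the locally constant function is globally a.e. constant; as this holds for every continuous $\varphi$, the measure $\tilde m$ is ergodic, and hence so is $(f,m)$.

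The main obstacle is the one already flagged: on $M$ itself there is no unstable foliation, so every measure-theoretic statement—absolute continuity of unstable conditionals, the Fubini step, the product structure—must be organized on $\widetilde M$, whose transverse (stable-plus-pre-history) direction is in general not a manifold. The technical heart is therefore transferring the absolute continuity of $\mathcal F^{u}_{\overline{f}}$ from the universal cover, where $\overline{f}$ is a diffeomorphism but the lifted volume is infinite, to absolute continuity of the unstable conditionals of the finite measure $\tilde m$ on $\widetilde M$; the periodicity and quasi-isometry estimates of Section 4, together with the SRB machinery of \cite{QZ}, are exactly what make this transfer legitimate.
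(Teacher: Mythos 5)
Your plan is viable and rests on the same two pillars as the paper's proof---the SRB theory of \cite{QZ} applied to the natural extension (smooth invariant measure $\Rightarrow$ Pesin's entropy formula $\Rightarrow$ conditionals of $\tilde m$ along unstable plaques equivalent to leaf volume) together with constancy of forward Birkhoff averages along stable leaves---but the top-level organization is genuinely different. The paper never runs the Hopf argument on $\widetilde M$ itself: it argues by contradiction through the ergodic decomposition. If $m$ were not ergodic, two ergodic components $\mu_1,\mu_2$ would inherit the entropy formula $(\ref{SRB})$, hence be SRB by \cite{QZ}; inside each basin $B_i$ one finds a \emph{projected} unstable plaque $\pi(\eta(\tilde x_i))$ intersected with $B_i$ in full leaf measure, saturates it by leaves of $\mathcal{F}^s_f$ (an honest foliation of $M$, since stable manifolds of an endomorphism do not depend on pre-histories), and concludes that each basin contains an open set modulo a null set; transitivity (automatic for conservative Anosov endomorphisms, as $\Omega(f)=M$) then gives $f^N(D_1)\cap D_2\neq\emptyset$, hence $B_1\cap B_2\neq\emptyset$ and $\mu_1=\mu_2$, a contradiction. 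What this buys is precisely the avoidance of the step you yourself flag as the technical heart of your plan: no local product structure, no holonomy, and no Fubini argument ever has to be performed on the solenoid-like space $\widetilde M$, whose direction transverse to unstable plaques is not a manifold; the only geometric input downstairs is the stable saturation. Conversely, your route dispenses with the ergodic decomposition (and with the implicit fact, used by the paper, that the ergodic components of $m$ still satisfy the entropy formula), and replaces transitivity by connectedness plus full support. Two smaller remarks: your appeal to the quasi-isometry estimates of Section 4 is misplaced, since those are specific to $C^1$-perturbations of linear toral endomorphisms in Theorem \ref{teo2} and play no role in the ergodicity statement, which concerns a general closed manifold $M$; and note that both arguments are delicate at the same point, namely that the transverse (stable) direction must be \emph{absolutely} continuous and not merely continuous---the paper's assertion that the saturations $D_i$ contain open sets modulo zero already relies on this, exactly as your Fubini step does.
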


The above theorem seems to be folklore in the ergodic theory of hyperbolic dynamics. However,  we did not find any written proof.
Initially, observe that if $f$ is an Anosov endomorphism, then $f$ is Axiom. $A$ . Since we are supposing that $f$ is $m-$preserving, then $\Omega(f) = M$ and $f$ is transitive (see \cite{MP} or \cite{PRZ}).

\begin{proof} By Pesin formula for endomorphisms, we know that

$$h_m(f) = \int_M \sum_{i} \lambda_i^{+}(x)m_i(x) dm,$$

where $m_i(x)$ is the algebraic multiplicity of $\lambda_i(x).$ In fact we want to prove that the ergodic decomposition of $m$ is trivial. If $\mu$ is not ergodic, by ergodic decomposition theorem we can suppose that $m$ admits at least two ergodic components $\mu_1$ and $\mu_2,$  such that

\begin{equation}\label{SRB}
h_{\mu_k}(f) = \int_M \sum_{i} \lambda_i^{+}(x)m_i(x) d\mu_k, k=1,2.
\end{equation}
Denote by $B_i = B(\mu_i), i=1,2$ the basins of measures $\mu_1$ and $\mu_2$ respectively,

$$ B_i = \left\{ x \in M| \; \frac{1}{n} \sum_{j = 1}^{n-1} \varphi(f^j(x)) \rightarrow \int_M \varphi d\mu_i\right\}, i =1,2,$$

for every $\varphi \in C^0(M),$ we have $\mu_i(B_i) = 1.$

By the SRB characterization given in \cite{QZ}, the measures $\mu_1$ and $\mu_2$ are SRB, since $\mu_1$ and $\mu_2$ satisfies the formula $(\ref{SRB}).$

Moreover, using the SRB theory developed in \cite{QZ}, the measures $\tilde{\mu}_1, \tilde{\mu}_2,$ are SRB measure, and for $\tilde{m},$ a.e. $\tilde{x} \in M^f,$ we have

$$\pi(\tilde{\mu}_i^{\eta(\tilde{x})}(\tilde{x})) << m^u_{\tilde{x}}, i =1,2,$$

where $\eta(\tilde{x})$ is the atom of a subordinated partition for $\tilde{m}$ and $m^u_{\tilde{x}},$ is the Lebesgue volume defined on $\mathcal{F}^u_f({\tilde{x}}).$

Since $\tilde{\mu}_i(B(\tilde{\mu}_i)) = 1, i =1,2$ we have that $\mu_i(\pi(B(\tilde{\mu}_i))= 1.$ By absolute continuity of conditional measures with respect to Lebesgue measure (in fact equivalence), there exist  $\tilde{x}_1, \tilde{x}_2,$ such that the set

$$F^u_i := B_i \cap \pi(B(\tilde{\mu}_i))\cap \pi(\eta(\tilde{x}_i)), i = 1, 2 $$

has full Lebesgue measure in $\pi(\eta(\tilde{x}_i)), i =1,2.$

 Now we saturate $F^u_i $ by leaves of $\mathcal{F}^s_{f}.$ That is we take $D_i: = \displaystyle\bigcup_{z \in F^u_i } \mathcal{F}^s_f(z),$ as union of stable leaves through points of $F^u_i.$
As $\mathcal{F}^s_f$ is a  continuous foliation,  these saturations contain open sets modulus zero (w.r.t. $m$).

Now, if $z_i \in D_i,  $ then $\mathcal{F}^s_f(z_i )$ intersects $F^u_i$ in a point $y_i.$ Since $y_i, z_i$ are in the same stable leaf

$$ \lim_{n \rightarrow \infty} \frac{1}{n} \sum_{j = 1}^n \varphi(f^j(z_i)) =  \lim_{n \rightarrow \infty}  \frac{1}{n} \sum_{j = 1}^n \varphi(f^j(y_i)) \rightarrow \int_M \varphi d\mu_i , i =1,2 $$

thus $D_i \subset B_i, i =1,2.$

Now, since $f$ is transitive, there is $N$ such that $f^N(D_1) \cap D_2 \neq \emptyset$ in an open set (modulo a null $m$ set), since $f^N(B_1) \subset B_1,$ in particular $B_1 \cap B_2 \neq \varnothing,$ so $\mu_1 = \mu_2,$ absurd.

\end{proof}

\end{document}